\date{July 2, 2019}
\newtheorem{thm}{Theorem}[section]
\newtheorem{lemma}[thm]{Lemma}
\newtheorem{proposition}[thm]{Proposition}
\newtheorem{corollary}[thm]{Corollary}
\theoremstyle{remark}
\newtheorem{remark}[thm]{Remark}
\newcommand{\R}{\mathbb{R}}
\newcommand{\C}{\mathbb{C}}
\newcommand{\Z}{\mathbb{Z}}
\newcommand{\cK}{\mathcal{K}}
\newcommand{\cD}{\operatorname{\mathcal{D}}}
\newcommand{\cL}{\mathcal{L}}
\renewcommand\phi{\varphi}
\newcommand{\eps}{\varepsilon}
\renewcommand{\geq}{\geqslant}
\renewcommand{\leq}{\leqslant}
\newcommand{\Sum}{\displaystyle \sum}
\DeclareMathOperator{\const}{const}
\DeclareMathOperator{\sgn}{sgn}
\newcommand{\Sph}{\mathbb{S}}
\renewcommand{\geq}{\geqslant}
\renewcommand{\leq}{\leqslant}
\newcommand\I{{\rm{i}}}
\newcommand\N{{\mathbb{N}}}
\newcommand{\be}{\begin{equation}}
\newcommand{\ee}{\end{equation}}
\title[Sharp decay estimates for critical Dirac equations]{Sharp decay estimates for critical Dirac equations}
\author[W. Borrelli]{William Borrelli}
\address[W. Borrelli]{Centro De Giorgi, Scuola Normale Superiore, Piazza dei Cavalieri 3, I-56100 , Pisa, Italy.}
\email{william.borrelli@sns.it}
\author[R. L. Frank]{Rupert L. Frank}
\address[R. L. Frank]{Mathematisches Institut, Ludwig-Maximilans Universit\"at M\"unchen, Theresienstr. 39, 80333 M\"unchen, Germany, and Mathematics 253-37, Caltech, Pasa\-de\-na, CA 91125, USA}
\email{rlfrank@caltech.edu}
\begin{document}

\begin{abstract}
We prove sharp pointwise decay estimates for critical Dirac equations on $\R^n$ with $n\geq 2$. They appear for instance in the study of critical Dirac equations on compact spin manifolds, describing blow-up profiles, and as effective equations in honeycomb structures. For the latter case, we find excited states with an explicit asymptotic behavior. Moreover, we provide some classification results both for ground states and for excited states.
\end{abstract}

\thanks{\copyright\, 2019 by the authors. This paper may be
reproduced, in its entirety, for non-commercial purposes.\\
U.S.~National Science Foundation grant DMS-1363432 (R.L.F.) is acknowledged.}

\maketitle


\section{Introduction}
\label{sec-intro}

\subsection{Main results}

This paper is devoted to the study of solutions of the nonlinear Dirac equation
\be\label{criticaldirac}
\cD\psi=\vert\psi\vert^{2^\sharp-2}\psi \qquad\mbox{on}\quad\R^{n}
\ee
with the \emph{critical exponent}
$$
2^\sharp:=\frac{2n}{n-1} \,,
$$
as well as certain extensions of this equation of the form
\be\label{criticaldiracgen}
\cD\psi=h(x,\psi)\,\psi \qquad\mbox{on}\quad\R^{n} \,,
\ee
where $h$ is a matrix-valued function which is (approximately) homogeneous of degree $2^\#-2$. We will always assume that $n\geq 2$.

As we describe below in more detail, there are at least two motivations for studying these equations, one coming from the spinorial analogue of the Yamabe problem in geometric analysis and the other one from an effective description of wave propagation in two-dimensional systems with the symmetries of a honeycomb lattice.

We are interested in two different aspects of solutions of equations \eqref{criticaldirac} and \eqref{criticaldiracgen}. The first one concerns sharp bounds on the decay of solutions. The second one concerns the classification of solutions possessing some extra symmetry. The link between these two aspects is that our classification results show that our decay estimates are always sharp for `ground state solutions' but, on the other hand, that `excited state solutions' in general exhibit a faster decay, at least if $n=2$.

We proceed to a precise description of our results. For $n\geq 2$ let $N=2^{\left[\frac{n+1}{2} \right]}$, where $[\cdot]$ denotes the integer part of a real number, and let $\alpha_1,\ldots,\alpha_n$ be $N\times N$ Hermitian matrices satisfying the anticommutation relations
\be\label{anticommutation}
\alpha_{j}\alpha_{k}+\alpha_{k}\alpha_{j}=2\delta_{j,k} \,,\qquad 1\leq j,k\leq n \,.
\ee
Such matrices exist and form a representation of the \emph{Clifford algebra} of the Euclidean space (see e.g. \cite{diracoperators}). Different choices of matrices satisfying \eqref{anticommutation} correspond to unitarily equivalent representations. For $n=2$ we will choose
\begin{equation}
\label{eq:alpha2}
\alpha_1 = \begin{pmatrix}
 0 & 1 \\ 1& 0
\end{pmatrix},
\qquad
\alpha_2 = \begin{pmatrix}
 0 & -\I \\ \I & 0
\end{pmatrix}.
\end{equation}
For $n\geq 3$ we will choose the $\alpha_j$ of a particular block-antidiagonal form, namely, let $\sigma_1,\ldots,\sigma_n$ be $\frac{N}{2}\times\frac{N}{2}$ Hermitian matrices satisfying analogous anticommutation relations as in \eqref{anticommutation}, namely,
$$\sigma_{j}\sigma_{k}+\sigma_{k}\sigma_{j}=2\delta_{j,k},\qquad 1\leq j,k\leq n. $$
Then the matrices
\be\label{sigmaanticommutation}
\alpha_{j}=\begin{pmatrix}0 & \sigma_{j} \\ \sigma_{j} & 0 \end{pmatrix}, \qquad 1\leq j\leq n,
\ee
satisfy \eqref{anticommutation} and we shall work in the following with this choice. We write $\boldsymbol{\sigma}=(\sigma_{j})^{n}_{j=1}$ and $a\cdot\boldsymbol\sigma:= \sum_{j=1}^n a_j\sigma_j$ for $a\in\R^n$. For $n=2$, we define $a\cdot\boldsymbol \sigma = a_1 + \I a_2$ for $a\in\R^2$.

Given a choice of matrices $\alpha_j$, the Dirac operator is defined as an operator acting on functions on $\R^n$ with values in $\C^N$ by
\be\label{diracdefinition}
\cD:=-\I\boldsymbol{\alpha}\cdot\nabla=-\I\sum^{n}_{j=1}\alpha_{j}\partial_{x_{j}} \,.
\ee
A more detailed presentation of Dirac operators and Clifford algebras can be found, for instance, in \cite{diracoperators,jost}. 

We assume throughout that $h$ is a function from $\R^n\times\C^N$ to $N\times N$ matrices satisfying
\begin{equation}
\label{eq:hass}
\sup_{x\in\R^n,\, z\in\C^N} |z|^{-2^\#+2} \| h(x,z) \| < \infty
\end{equation}
where $\|\cdot\|$ denotes a matrix norm on $\C^{N\times N}$. The most important case is where $h(x,z)=|z|^{2^\#-2}$, in which case \eqref{criticaldiracgen} reduces to \eqref{criticaldirac}.

We say that $\psi\in L^{2^\sharp}(\R^n,\C^N)$ is a weak solution to \eqref{criticaldiracgen} if
\be\label{weaksolution}
\int_{\R^n}\langle\cD\phi,\psi\rangle \,dx =\int_{\R^n} \langle\phi,h(x,\psi)\psi\rangle \,dx, \qquad\text{for all}\ \phi\in C^{\infty}_{c}(\R^n,\C^N).
\ee
We will give below some explicit examples of solutions of \eqref{criticaldirac}. Modifying the arguments in \cite{massless} in a straightforward way, one can show existence of solutions for a wide class of functions $h$.

Finally, for $0\neq x\in\R^n$ we introduce the following unitary matrices
\begin{equation}
\label{eq:defu}
\mathcal{U}(x) = \begin{pmatrix}
0 & -\I\frac{x}{|x|}\cdot\boldsymbol\sigma \\ \I \frac{x}{|x|}\cdot\boldsymbol\sigma & 0
\end{pmatrix}
\quad\text{if}\ n\geq 3 \,,
\qquad
\mathcal{U}(x) = \begin{pmatrix}
 0 & -\I\ \frac{x_1-\I x_2}{|x|} \\ \I\ \frac{x_1+\I x_2}{|x|} & 0
\end{pmatrix}
\quad\text{if}\ n= 2 \,.
\end{equation}

The following is the first main result of this paper.

\begin{thm}\label{thm-main}
Assume \eqref{eq:hass} and let $\psi\in L^{2^\sharp}(\R^n,\C^N)$ be a weak solution of \eqref{criticaldiracgen}. Then $\psi\in C^{0,\alpha}(\R^n,\C^N)$ for any $\alpha<1$ and there is a $\Psi\in\C^N$ such that for any $\alpha<1$ there is a $C_\alpha<\infty$ such that
$$
\left| \psi(x) - |x|^{-n+1} \mathcal{U}(x) \Psi \right| \leq C_\alpha |x|^{-n+1-\alpha}
\qquad\text{for all}\ |x|\geq 1 \,.
$$
\end{thm}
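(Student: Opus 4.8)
The plan is to prove the statement in three stages: first upgrade integrability, then establish continuity and decay of $\psi$ itself, and finally extract the precise asymptotic profile via an ODE/integral-equation analysis in the radial direction.

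\textbf{Step 1: Bootstrap regularity and integrability.} Starting from $\psi\in L^{2^\sharp}$ and the assumption \eqref{eq:hass}, which gives $|h(x,\psi)\psi|\lesssim|\psi|^{2^\#-1}$, one has $h(x,\psi)\psi\in L^{2^\sharp/(2^\#-1)}=L^{2n/(n+1)}$. Since $\cD$ is (up to constants) the square root of $-\Delta$, its fundamental solution decays like $|x|^{-(n-1)}$, so the Riesz-type potential estimate $\psi = \cD^{-1}(h(x,\psi)\psi)$ improves integrability. Iterating this bootstrap (as in the standard elliptic regularity argument, cf.\ the reference \cite{massless} cited just before the theorem) one obtains $\psi\in L^p$ for all $p\in(2n/(n+1),\infty]$, hence $\psi\in L^\infty$, and then $h(x,\psi)\psi\in L^\infty\cap L^p$ for all large $p$; elliptic estimates for the first-order system $\cD\psi = f$ with $f\in L^\infty\cap L^p$ give $\psi\in W^{1,p}_\loc$ and thus $\psi\in C^{0,\alpha}_\loc$ for all $\alpha<1$ by Morrey embedding. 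This yields the claimed global Hölder regularity together with $\psi\in L^\infty(\R^n)$.

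\textbf{Step 2: Pointwise decay $|\psi(x)|\lesssim|x|^{-(n-1)}$.} The convolution representation $\psi(x) = c_n \int_{\R^n} \frac{(x-y)\cdot\boldsymbol\alpha}{|x-y|^n}\,\frac{\I}{|x-y|^{?}}\,h(y,\psi(y))\psi(y)\,dy$ — more precisely $\psi = G * (h(\cdot,\psi)\psi)$ where the Green's function $G(x)$ of $\cD$ satisfies $|G(x)|\lesssim|x|^{-(n-1)}$ — combined with the now-known decay $|h(y,\psi)\psi|\lesssim|\psi(y)|^{2^\#-1}$ and the global integrability/boundedness of $\psi$, lets one run a standard "decay bootstrap'': split the integral into the regions $|y|\le|x|/2$, $|y-x|\le|x|/2$, and the rest, and show that if $|\psi(y)|\lesssim|y|^{-\beta}$ on $|y|\ge1$ then in fact $|\psi(x)|\lesssim|x|^{-\min(\beta',n-1)}$ for some $\beta'>\beta$, provided $\beta(2^\#-1)>n$, which holds already for $\beta$ slightly above $n/(2^\#-1) = (n-1)/2$. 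Iterating saturates at the rate dictated by $G$, namely $|\psi(x)|\lesssim|x|^{-(n-1)}$. (One also gets, as a byproduct, $|h(x,\psi)\psi|\lesssim|x|^{-(n-1)(2^\#-1)} = |x|^{-(n+1)}$, which is the integrable decay needed below.)

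\textbf{Step 3: Extraction of the asymptotic profile.} This is the heart of the matter and, I expect, the main obstacle. The idea is to conjugate the equation by the unitary $\mathcal{U}(x)$ and write things in polar coordinates $x=r\omega$, $\omega\in\Sph^{n-1}$. Using the algebraic identity $\boldsymbol\alpha\cdot x = |x|\,\Omega$ for a suitable involution $\Omega$ built from $\omega\cdot\boldsymbol\sigma$, the operator $\cD$ decomposes as $\cD = -\I(\boldsymbol\alpha\cdot\omega)\big(\partial_r + \tfrac{1}{r}\text{(angular Dirac operator)}\big)$; the factor $|x|^{-(n-1)}\mathcal{U}(x)$ is precisely a solution of the "free'' radial equation $\cD(|x|^{-(n-1)}\mathcal{U}(x)v)=0$ for constant $v\in\C^N$ lying in the appropriate eigenspace of the angular operator (the $-\I\boldsymbol\alpha\cdot\omega$ maps that eigenspace to itself, and $|x|^{-(n-1)}$ is the corresponding homogeneity; this is exactly why $\mathcal{U}$ is defined as in \eqref{eq:defu}). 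Writing $\psi(x) = |x|^{-(n-1)}\mathcal{U}(x)\big(\Psi + R(x)\big)$ and inserting into $\cD\psi = h(x,\psi)\psi$, one derives a first-order ODE in $r$ for the radial average (projection onto the relevant eigenspace) of $R$, with a forcing term controlled by $|x|^{(n-1)}|h(x,\psi)\psi|\lesssim|x|^{-2}$, which is integrable in $dr$ at infinity. Integrating from $r$ to $\infty$ shows the radial average of $R$ converges to a limit $\Psi\in\C^N$ with remainder $O(r^{-1})$; the components of $R$ in the orthogonal complement of the eigenspace are governed by a massive ODE and decay even faster. Finally, combining this with the Hölder continuity from Step 1 to control angular oscillations of $R$ on each sphere $|x|=r$ gives $|R(x)|\le C_\alpha|x|^{-\alpha}$ for every $\alpha<1$, which is the assertion. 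The delicate points are: (i) correctly identifying the eigenspace decomposition of the angular Dirac operator so that $\mathcal{U}(x)\Psi$ really is the leading profile and the complementary modes decay faster, and (ii) converting the $L^2$-in-angle convergence of the radial ODE into a pointwise statement, for which the uniform Hölder bound of Step 1 is essential.
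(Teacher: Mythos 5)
Your Step 1 contains the essential gap. The bootstrap you describe does not start: $\Gamma$ lies only in $L^{n/(n-1),\infty}$, and the weak Young inequality maps $h(\cdot,\psi)\psi\in L^{2n/(n+1)}$ exactly back to $L^{2n/(n-1)}=L^{2^\sharp}$, with no gain of integrability --- this is precisely what it means for $2^\sharp$ to be critical. The same criticality blocks the base case of your Step 2: with only $\psi\in L^{2^\sharp}\cap L^\infty$ in hand, splitting the convolution gives at best $|\psi(x)|\lesssim |x|^{-(n-1)/2}$ from the far region (the near region, estimated crudely with $\|\psi\|_\infty$, even grows like $|x|$), and feeding $\beta=(n-1)/2$ back in reproduces $|x|^{-(n-1)/2}$; you never reach ``$\beta$ slightly above $(n-1)/2$'' without an extra idea. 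The standard fix, and the one the paper uses (Proposition \ref{bounded}, following Jannelli--Solimini in the spirit of Brezis--Kato), is to split the potential $h(\cdot,\psi)=f_\eps+g_\eps$ with $f_\eps$ bounded and supported on a set of finite measure and $\|g_\eps\|_{L^n}\leq\eps$, and to absorb the $g_\eps$-contribution by a duality argument; the smallness of $\|\,|\psi|^{2^\sharp-2}\|_{L^n}$ off a large set is what breaks the scale invariance. Without this (or an equivalent device) neither your integrability nor your decay bootstrap gets off the ground.

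Your Step 3 is a genuinely different route from the paper's. The paper applies the Kelvin transform $\psi_\cK(x)=|x|^{-(n-1)}\mathcal U(x)\psi(x/|x|^2)$, checks that $\psi_\cK$ solves an equation of the same class with the singularity at the origin removable, and then reads off the asymptotics of $\psi$ at infinity as H\"older continuity of $\psi_\cK$ at $0$; the whole theorem thus reduces to the regularity statement, and no separate pointwise decay estimate or radial analysis is needed. Your conjugation/angular-decomposition scheme is heavier than necessary and is only sketched at the points you yourself flag as delicate. If you insist on working directly at infinity, note that once $|h(\cdot,\psi)\psi|\lesssim\min(1,|x|^{-(n+1)})$ is known, the simpler expansion $\psi=\Gamma*(h\psi)=\Gamma(x)\int h\psi\,dy+\int\bigl(\Gamma(x-y)-\Gamma(x)\bigr)h(y,\psi)\psi\,dy$ already yields the profile $|x|^{-n+1}\mathcal U(x)\Psi$ with an $O(|x|^{-n}\log|x|)$ error, since $\Gamma(x)$ is itself $|x|^{-n+1}\mathcal U(x)$ times a constant matrix; no eigenspace decomposition of the angular Dirac operator is required. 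But in either variant the conclusion stands or falls with the critical bootstrap in your Steps 1--2, which as written is incomplete.
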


Later on, we will give natural examples showing that both $\Psi\neq 0$ and $\Psi=0$ can occur.

Note that the theorem implies that
\be\label{eq:decaymain}
\vert\psi(x)\vert\leq \frac{C}{1+\vert x\vert^{n-1}} \qquad \text{for all}\ x\in\R^n \,.
\ee
In particular,
\be\label{integrability}
\psi\in L^{p}(\R^n, \C^N) \qquad \mbox{for all}\ \frac{n}{n-1}<p\leq\infty
\ee
and
\begin{equation}
\label{eq:integrabilityweak}
\psi\in L^{\frac{n}{n-1},\infty}(\R^n ,\C^N) \,.
\end{equation}
(Here $L^{\frac{n}{n-1},\infty}(\R^n ,\C^N)$ denotes the weak Lebesgue space.) The fact that $\Psi\neq 0$ can occur shows that, in general,
\be\label{exceptional}
\psi\notin L^{\frac{n}{n-1}}(\R^n ,\C^N).
\ee
In some applications it is crucial whether solutions of \eqref{criticaldirac} are square-integrable or not. Our result shows that square-integrability holds always in $n\geq 3$ and may not hold in $n=2$. We will investigate the case $n=2$ below in more detail.

We also note that if one replaces $\mathcal D$ by the \emph{massive} Dirac operator then solutions exhibit exponential decay. This was shown in \cite{berthiergeorgescu} for $n=3$ and extended in \cite{spectralnabilecomech} to arbitrary $n$; see also \cite{cassano}. The fact that there are indeed solutions to the massive analogue of \eqref{criticaldirac} was shown in \cite{shooting}.

Our proof of Theorem \ref{thm-main} uses crucially the fact that the class of equations \eqref{criticaldiracgen} is invariant under inversion in the unit sphere (\emph{Kelvin transform}). This reduces the problem of proving the existence of asymptotics to the problem of continuity at the origin. The latter will be solved by a bootstrap argument using some ideas of \cite{jannellisolimini}.

In Section \ref{sec:reghigher}, for a special class of non-linearities $h$ we will be able to prove more regularity and more precise asymptotics at infinity. In particular, for solutions of \eqref{criticaldirac} we will prove that $\psi\in C^\infty$ if $n=2$ and $\psi\in C^{1,\alpha}$ for any $\alpha<2/(n-1)$ if $n\geq 3$. The $C^\infty$ regularity for $n=2$ was previously shown by a different argument in \cite{remarkdirac}. A weaker version of regularity for $n\geq 3$, namely $C^{1,\alpha}$ with some unspecified $\alpha$, appears in \cite{isobecritical}. Moreover, we will use this regularity together with the Kelvin transform to establish higher order asymptotics at infinity. In particular, for \eqref{criticaldirac} in $n=2$ we will prove a complete asymptotic expansion at infinity. We refer the reader to that section for more details.

\medskip

We now describe a class of well-known solutions of \eqref{criticaldirac}. We fix a vector $\boldsymbol{n}\in \C^{N/2}$ with $\vert\boldsymbol{n}\vert=1$ and a parameter $\lambda>0$ and consider
\be\label{eq:standardbubble}
\psi(x)=\lambda^{-(n-1)/2} \begin{pmatrix} V(r/\lambda)\boldsymbol{n} \\ \I U(r/\lambda)\left(\frac{ x}{r}\cdot\boldsymbol{\sigma} \right)\boldsymbol{n} \end{pmatrix}
\ee
with
\begin{equation}
\label{eq:standardbubblescalar}
U(r) = n^{(n-1)/2} (1+r^2)^{-n/2} r \,,
\qquad
V(r) = n^{(n-1)/2} (1+r^2)^{-n/2} \,.
\end{equation}
These functions appear, for instance, in \cite{ammannmass}. A straightforward computation shows that they are solutions of \eqref{criticaldirac}. Moreover, they satisfy
\begin{equation}
\label{eq:standardbubblemodulus}
|\psi(x)| = \lambda^{-(n-1)/2} \sqrt{V(r/\lambda)^2+U(r/\lambda)^2} = \lambda^{-(n-1)/2} n^{(n-1)/2}(1+(r/\lambda)^2)^{-(n-1)/2} \,,
\end{equation}
which proves that the case $\Psi\neq 0$ in Theorem \ref{thm-main} can, in fact, occur.

The solutions \eqref{eq:standardbubble}, \eqref{eq:standardbubblescalar} are `ground state solutions' or `least energy solutions' of \eqref{criticaldirac} in the sense that any solution $\psi\not\equiv 0$ of \eqref{criticaldirac} satisfies
$$
\frac{1}{2}\int_{\R^{n}}\langle\cD\psi,\psi\rangle \,dx-\frac{1}{2^\sharp}\int_{\R^n}\vert\psi\vert^{2^\sharp}\,dx \geq \frac{1}{2n}\left( \frac{n}{2}\right)^{n} |\Sph^n|
$$
with equality exactly for \eqref{eq:standardbubble}, \eqref{eq:standardbubblescalar}. This bound was shown in \cite[Proposition 4.1]{isobecritical} and is based on inequalities by Hijazi \cite{hijazi} and B\"ar \cite{bar0} after mapping equation \eqref{criticaldirac} conformally to the sphere. (The equality statement made above is not explicitly stated in \cite{isobecritical}, but follows from the same arguments, taking the corresponding equality statements in Hijazi's and B\"ar's inequalities into account.) We also note that a simple extension of \cite{massless} to higher dimensions shows the existence of a non-trivial least energy solution for \eqref{criticaldirac}. This argument does not give the above minimal value, but has the advantage of working for a more general class of nonlinearities $h$.

Here we present a different characterization of the solutions \eqref{eq:standardbubble}, \eqref{eq:standardbubblescalar}. Given $\boldsymbol{n}\in \C^{N/2}$ with $\vert\boldsymbol{n}\vert=1$ we consider solutions of \eqref{criticaldirac} of the form
\be\label{soler}
\psi(x)=
\begin{pmatrix}
v(r)\boldsymbol{n} \\ 
\I u(r)\left(\frac{ x}{r}\cdot\boldsymbol{\sigma} \right)\boldsymbol{n} \end{pmatrix},
\qquad r=\vert x\vert,\qquad u,v:(0,\infty)\longrightarrow\R \,.
\ee
This form of solutions is sometimes \cite{nonrelativisticboussaid} called the \emph{Soler--Wakano-type} ansatz. It leads to the following ODE system
\begin{equation}\label{eq:radialsystem}
\left\{\begin{aligned}
u' + \frac{n-1}{r} u = v(u^2+v^2)^{1/(n-1)} \,,\\
v' = -u(u^2+v^2)^{1/(n-1)}  \,,
\end{aligned}\right.
\end{equation}
which needs to be supplemented by boundary conditions at the origin. To have $\psi$ regular at the origin it is natural to require $u(0)=0$ and then, to have a non-trivial solution, $v(0)\neq 0$ (see, e.g. \cite{massless}). We will see, however, that weaker boundary conditions suffice.

\begin{thm}\label{classification}
Let $n>1$ and let $u,v$ be real functions on $(0,\infty)$ satisfying \eqref{eq:radialsystem} as well as
$$
\lim_{r\to 0} r^{(n-1)/2} u(r) = \lim_{r\to 0} r^{(n-1)/2} v(r) = 0 \,.
$$
Then either $u=v=0$ or
$$
u(r) = \sigma \lambda^{-(n-1)/2} U(r/\lambda) \,,
\qquad
v(r) = \sigma \lambda^{-(n-1)/2} V(r/\lambda)
$$
for some $\lambda>0$ and $\sigma\in\{+1,-1\}$ with $U$ and $V$ from \eqref{eq:standardbubblescalar}.
\end{thm}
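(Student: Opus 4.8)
The plan is to recast \eqref{eq:radialsystem} as an autonomous planar system and then classify its trajectories with the prescribed behaviour at the origin by a phase-plane analysis.

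\emph{Reduction.} Since $n\ge2$, the map $(u,v)\mapsto v(u^2+v^2)^{1/(n-1)}$ and its companion $(u,v)\mapsto u(u^2+v^2)^{1/(n-1)}$ are $C^1$ on $\R^2$ with locally bounded derivative, hence locally Lipschitz; so \eqref{eq:radialsystem} has the uniqueness property, and if $(u,v)$ vanishes at some $r_0>0$ then $u\equiv v\equiv 0$. Assume from now on $\rho:=\sqrt{u^2+v^2}>0$ on $(0,\infty)$. Writing $u=\rho\sin\theta$, $v=\rho\cos\theta$ with $\theta$ a continuous real function (unique up to $2\pi\Z$), setting $t=\log r$ and $m=r\,\rho^{2/(n-1)}$, and using $u'v-uv'=\rho^2\theta'$ and $uu'+vv'=\rho\rho'$, one checks (with $\dot{\ }=d/dt$) that $(\theta,m)$ solves
\[
\dot\theta=m-\tfrac{n-1}{2}\sin2\theta,\qquad \dot m=m\cos2\theta,\qquad m>0.
\]
(In particular $\rho$ is non-increasing in $r$.) In these variables the scaling in \eqref{eq:standardbubble} is the time-translation $t\mapsto t-\log\lambda$, the change of sign $(u,v)\mapsto(-u,-v)$ is $\theta\mapsto\theta+\pi$, which is a symmetry of the system, and the bubble \eqref{eq:standardbubblescalar} is the orbit $\theta=\arctan(e^t)$, $m=n\,e^t/(1+e^{2t})$, i.e.\ the orbit that leaves the equilibrium $(0,0)$ along its unstable manifold into $\{m>0\}$ and tends to $(\tfrac\pi2,0)$ as $t\to+\infty$. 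Finally, since $m^{(n-1)/2}=r^{(n-1)/2}\rho=\bigl((r^{(n-1)/2}u)^2+(r^{(n-1)/2}v)^2\bigr)^{1/2}$, the hypothesis at the origin says exactly that $m(t)\to0$ as $t\to-\infty$.

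\emph{Confining $\theta$ at $-\infty$.} Fix $T$ with $m(t)<\tfrac{n-1}2$ for all $t\le T$. At $\theta=-\tfrac\pi4+k\pi$ one has $\dot\theta=m+\tfrac{n-1}2>0$, and at $\theta=\tfrac\pi4+k\pi$ one has $\dot\theta=m-\tfrac{n-1}2<0$ for $t\le T$; hence, for $t\le T$, the trajectory crosses each line $\theta\in-\tfrac\pi4+\pi\Z$ only with $\theta$ increasing and each line $\theta\in\tfrac\pi4+\pi\Z$ only with $\theta$ decreasing. These lines partition the $\theta$-axis into intervals of length $\tfrac\pi2$, alternately of two types: those of the form $(\hat k\pi-\tfrac\pi4,\hat k\pi+\tfrac\pi4)$ around the sinks $\hat k\pi$ of the boundary flow $\dot\theta=-\tfrac{n-1}2\sin2\theta$, and those around its sources $\tfrac\pi2+\hat k\pi$. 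Checking the crossing rules shows that, for $t\le T$, the source-type intervals are backward invariant, while a sink-type interval can be exited, going backwards, only into an adjacent source-type interval. Hence there are $T_1\le T$ and a single such interval $W$ with $\theta(t)\in W$ for all $t<T_1$; in particular $\theta$ is bounded as $t\to-\infty$. Were $W$ a source-type interval, the $\alpha$-limit set of the trajectory would be a non-empty compact connected invariant subset of $\overline W\times\{0\}$ on which the boundary flow points outward at both endpoints, hence the singleton $\{(\tfrac\pi2+\hat k\pi,0)\}$; then the trajectory would converge backwards to that hyperbolic saddle and so lie on its unstable manifold, which is contained in the invariant line $\{m=0\}$ — impossible since $m>0$. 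Therefore $W=(\hat k\pi-\tfrac\pi4,\hat k\pi+\tfrac\pi4)$ for some integer $\hat k$, and now the $\alpha$-limit set, a non-empty compact connected invariant subset of $[\hat k\pi-\tfrac\pi4,\hat k\pi+\tfrac\pi4]\times\{0\}$ on which the boundary flow has the single equilibrium $\hat k\pi$ and points inward at both endpoints, must equal $\{(\hat k\pi,0)\}$. Thus $(\theta(t),m(t))\to(\hat k\pi,0)$ as $t\to-\infty$.

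\emph{Conclusion, main difficulty, and a shortcut.} Since $(\hat k\pi,0)$ is a hyperbolic saddle, the trajectory lies on its one-dimensional unstable manifold; the branch in $\{m>0\}$ is, by $\pi$-periodicity in $\theta$, precisely the bubble orbit translated by $\hat k\pi$. Two parametrisations of one orbit of an autonomous system differ by a time-shift, which here is the scaling parameter $\lambda$, and the translation by $\hat k\pi$ is the sign $\sigma=(-1)^{\hat k}$; unwinding $u=\rho\sin\theta$, $v=\rho\cos\theta$ with $\rho=(m\,e^{-t})^{(n-1)/2}$ yields $u(r)=\sigma\lambda^{-(n-1)/2}U(r/\lambda)$ and $v(r)=\sigma\lambda^{-(n-1)/2}V(r/\lambda)$. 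The crux of the whole argument is the middle step — ruling out ``winding'' of $\theta$ and convergence of the trajectory to the saddles $(\tfrac\pi2+k\pi,0)$, which is what forces it onto the unstable manifold of a ``good'' saddle $(\hat k\pi,0)$, i.e.\ onto the bubble; the remaining ingredients (local uniqueness, properties of $\alpha$-limit sets, the stable--unstable manifold theorem) are standard. Finally, when $n=2$ there is a shortcut: the planar system then has the first integral $m^2-m\sin2\theta$, which tends to $0$ as $t\to-\infty$ and hence vanishes identically; with $m>0$ this gives $m=\sin2\theta$, so $\dot\theta=\tfrac12\sin2\theta$, and separation of variables yields $\tan\theta=cr$, precisely the asserted one-parameter family.
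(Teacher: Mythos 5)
Your proposal is correct, but it takes a genuinely different route from the paper's. The paper also passes to logarithmic variables, setting $f(t)=r^{(n-1)/2}u(r)$, $g(t)=r^{(n-1)/2}v(r)$ with $t=\ln r$, but then exploits the Hamiltonian structure: the quantity $\mathcal E=-fg+\tfrac1n(f^2+g^2)^{n/(n-1)}$ is conserved and vanishes by the boundary condition, giving the algebraic relation $fg=\tfrac1n(f^2+g^2)^{n/(n-1)}$; solving this for $f^2$ and $g^2$ in terms of $f^2+g^2$ yields a closed first-order ODE for $f^2+g^2$ alone, which is integrated explicitly to $(n/2)^{n-1}\cosh^{-n+1}(t-t_0)$, after which $f$ and $g$ are reconstructed. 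In your coordinates this conserved quantity reads $\tfrac1n m^{n}-\tfrac12 m^{n-1}\sin2\theta$, so your $n=2$ ``shortcut'' is precisely the paper's mechanism specialized to $n=2$; for general $n$ you instead forgo the first integral and run a phase-plane argument (confinement of $\theta$, $\alpha$-limit sets on the invariant line $m=0$, the stable/unstable manifold theorem at the saddles $(k\pi/2,0)$), which I have checked and which goes through. The paper's computation is more elementary and derives the explicit profile rather than matching against it, which is why it also handles the singular solution of Remark \ref{singsol} and non-integer $n>1$ with no extra effort; your argument needs the explicit bubble as an a priori input to identify the unstable-manifold branch and uses heavier machinery, but it is more robust (it would survive perturbations destroying the conserved quantity) and is close in spirit to the Poincar\'e--Bendixson argument the paper itself uses for Theorem \ref{2Dcase}. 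Two small points if you write this up: the exclusion of backward convergence to $(\tfrac\pi2+k\pi,0)$ can be done without unstable manifolds, since near such a point $\cos2\theta<-\tfrac12$, whence $\dot m<-\tfrac12 m$ and $m$ would be bounded away from $0$ backward in time; and you should note that the $(\theta,m)$ vector field is smooth even though the $(u,v)$ field is only $C^1$, since hyperbolicity is invoked for the former.
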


\begin{remark}\label{singsol}
The boundary conditions at the origin are necessary for the result to hold, since $u(r)=v(r)=\sqrt{(1/2)((n-1)/2)^{n-1}}\, r^{-(n-1)/2}$ is also a solution of the equation. Moreover, the same result holds, with the same proof, if the boundary condition at the origin is replaced by the condition
$$
\lim_{r\to\infty} r^{(n-1)/2} u(r) = \lim_{r\to\infty} r^{(n-1)/2} v(r) = 0 \,.
$$
at infinity. 
\end{remark}

We next discuss `excited state solutions' of \eqref{criticaldirac} and, more generally, of \eqref{criticaldiracgen} in the case $n=2$. When $n=2$, we have $N=2$ and we write $\psi=(\psi_1,\psi_2)$. We consider the nonlinearity in \eqref{criticaldiracgen} of the form
$$
h(z) = \begin{pmatrix}
\beta_1 |z_1|^2 + 2\beta_2|z_2|^2 & 0 \\
0 & \beta_1|z_2|^2 + 2\beta_2 |z_1|^2
\end{pmatrix}
$$
with given parameters $\beta_1,\beta_2>0$. As we will explain below, this particular nonlinearity arises in a problem from mathematical physics. Equation \eqref{criticaldiracgen} becomes the system
\begin{equation}\label{diracsystem}
\left\{\begin{aligned}
    (-\I\partial_{x_{1}} - \partial_{x_{2}})\psi_{2} &=(\beta_{1}\vert\psi_{1}\vert^{2}+2\beta_{2}\vert\psi_{2}\vert^{2})\psi_{1}  \,,\\
         (-\I \partial_{x_{1}} + \partial_{x_{2}})\psi_{1} &= (\beta_{1}\vert\psi_{2}\vert^{2}+2\beta_{2}\vert\psi_{1}\vert^{2})\psi_{2}\,.
\end{aligned}\right.
\end{equation}
Given $S\in\Z$ we look for solutions of \eqref{diracsystem} of the form
\begin{equation}\label{ansatzexcitedstates}
\psi(x) = \begin{pmatrix}
\psi_1(x) \\
\psi_2(x) \end{pmatrix}
=
\begin{pmatrix}
v(r) e^{\I S\theta} \\
\I u(r) e^{\I (S+1)\theta}
\end{pmatrix},\quad x=(r\cos\theta,r\sin\theta),\quad u,v:(0,\infty)\longrightarrow\R \,.
\end{equation}
Plugging this ansatz into \eqref{diracsystem} gives the system
\begin{equation}\label{system}
\left\{\begin{aligned}
    u'+\frac{S+1}{r}u &=v(\beta_1 v^2 + 2\beta_{2}u^{2})\,, \\ 
   v'-\frac{S}{r}v&=-u(\beta_{1}u^{2}+2\beta_{2}v^{2}) \,.
\end{aligned}\right.
\end{equation}
The following theorem shows that there is a unique (up to symmetries) solution of this system and provides precise asymptotics of $u$ and $v$. In particular, we see that solutions with $S\neq 0$ have a polynomially faster decay than the ground state solution \eqref{eq:standardbubble}, \eqref{eq:standardbubblescalar}. In particular, these are examples where the case $\Psi=0$ in Theorem \ref{thm-main} occurs.

\begin{thm}\label{2Dcase}
Let $\beta_1,\beta_2>0$ and $S\in\Z$ and put
$$
a= \left( \frac{|2S+1|}{\beta_1+2\beta_2} \right)^{1/2}
\qquad\text{and}\qquad
\tau = \sgn(S+1/2) \,.
$$
\begin{enumerate}
\item Let $(u,v)=(Q,P)$ be the solution of \eqref{system} with
$$
Q(1) = a
\qquad\text{and}\qquad
P(1) = \tau a \,.
$$
Then $(Q,P)$ exists globally and satisfies for some $\ell\in(0,\infty)$, if $S+1/2>0$,
$$
\lim_{r\to\infty} r^{S+1} Q(r) = \lim_{r\to0} r^{-S} P(r)=\ell
$$
and
$$
\lim_{r\to\infty} r^{3S+2} P(r) = \lim_{r\to 0} r^{-3S-1} Q(r) = \frac{\beta_1 \ell^3}{2(2S+1)} \,,
$$
and if $S+1/2<0$,
$$
\lim_{r\to 0} r^{S+1} Q(r) = - \lim_{r\to\infty} r^{-S} P(r) = \ell
$$
and
$$
\lim_{r\to 0} r^{3S+2} P(r) = - \lim_{r\to\infty} r^{-3S-1} Q(r) = - \frac{\beta_1 \ell^3}{2(2S+1)} \,,
$$
Moreover,
$$
\tau\, Q(r)P(r) >0
\qquad\text{for all}\ r>0
$$
and
$$
P(r)= (\tau/r) \, Q(1/r)
\qquad\text{for all}\ r>0 \,.
$$
\item If $(u,v)$ is a solution of \eqref{system} satisfying
$$
\lim_{r\to 0} r^{1/2} u(r) = \lim_{r\to 0} r^{1/2} v(r) = 0 \,,
$$
then there are $\lambda>0$ and $\sigma\in\{-1,+1\}$ such that
$$
u(r) = \sigma \lambda^{-1/2} Q(r/\lambda)
\qquad
v(r) = \sigma \lambda^{-1/2} P(r/\lambda)
\qquad\text{for all}\ r>0 \,.
$$
\end{enumerate}
\end{thm}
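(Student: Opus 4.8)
The plan is to exploit the two exact symmetries of \eqref{system} together with a monotonicity analysis in well-adapted variables. First, a direct check shows that $(u,v)$ solves \eqref{system} for the parameter $S$ if and only if $(v,-u)$ solves it for the parameter $-S-1$; since $|2S+1|$ is invariant under $S\mapsto-S-1$ and this substitution exchanges the two initial-data configurations of part~(1) (sending $(a,\tau a)$ to $(a,-\tau a)$ and flipping $\tau$), we may and do assume from now on that $S\geq 0$, i.e. $\tau=+1$. Second, \eqref{system} is the radial reduction of \eqref{diracsystem}, so a direct computation (reflecting the conformal, Kelvin, invariance of \eqref{diracsystem}) shows that the involution $(u,v)\mapsto(\widetilde u,\widetilde v)$ with $\widetilde u(r)=r^{-1}v(r^{-1})$, $\widetilde v(r)=r^{-1}u(r^{-1})$ again maps solutions of \eqref{system} to solutions. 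Applying it to $(Q,P)$ and observing that $(\widetilde Q,\widetilde P)(1)=(P(1),Q(1))=(a,a)=(Q,P)(1)$, uniqueness for the ODE at the regular point $r=1$ forces $\widetilde Q=Q$ and $\widetilde P=P$, i.e. the reflection identity
$$
P(r)=r^{-1}Q(r^{-1}),\qquad Q(r)=r^{-1}P(r^{-1}),\qquad r>0 .
$$
This identity is the structural heart of the proof: it ties the behaviour at $r=\infty$ to the behaviour at $r=0$, so that global existence and the two-sided asymptotics only have to be controlled at one end.

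For part~(1) I would argue as follows. Local existence and uniqueness near $r=1$ is standard. Blow-up is excluded by the positive-definite energy
$$
H(r)=\tfrac{\beta_1}{4}\bigl(u(r)^4+v(r)^4\bigr)+\beta_2\,u(r)^2v(r)^2 ,
$$
for which a one-line computation gives $H'(r)=r^{-1}\bigl[\beta_1\bigl(Sv^4-(S+1)u^4\bigr)-2\beta_2u^2v^2\bigr]$, hence $\bigl(r^{-4S}H\bigr)'\leq 0$ and $H(r)\leq H(1)\,r^{4S}$ for $r\geq1$; since $H\asymp(u^2+v^2)^2$, this rules out finite-time blow-up as $r\to\infty$, and together with the reflection identity it yields global existence on $(0,\infty)$. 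Positivity, $Q,P>0$ on $(0,\infty)$ (equivalently $\tau QP>0$), follows from an analysis on the maximal interval containing $r=1$ on which $Q,P>0$: writing $u=\rho\sin\phi$, $v=\rho\cos\phi$ one gets a closed equation for $\phi$ with $\phi(1)=\pi/4$ and $\phi'(1)=\tfrac{2S+1}{2}>0$, and one shows that $\phi$ cannot leave $(0,\pi/2)$ (here $\rho>0$ everywhere by ODE uniqueness, and the reflection identity $\phi(r)+\phi(r^{-1})=\pi/2$ transfers control from one side of $r=1$ to the other). Finally, with positivity in hand the substitution $f=r^{S+1}u$, $g=r^{-S}v$ puts \eqref{system} in the form
$$
f'=\beta_1\,r^{4S+1}g^3+2\beta_2\,r^{-1}f^2g,\qquad g'=-\beta_1\,r^{-4S-3}f^3-2\beta_2\,r^{-1}fg^2 ,
$$
so $f$ is strictly increasing, $g$ strictly decreasing, the reflection identity becomes the clean swap $f(r)=g(r^{-1})$, and $\ell:=\lim_{r\to\infty}f(r)=\lim_{r\to0^+}g(r)$. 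One then shows $\ell\in(0,\infty)$ and extracts $g(r)\sim\tfrac{\beta_1\ell^3}{2(2S+1)}\,r^{-4S-2}$ as $r\to\infty$ by inserting the two-sided bounds from $H$ into the $g$-equation and bootstrapping the decay rate. Undoing the substitution and using the reflection identity converts these into all the stated limits for $Q,P$ at $0$ and $\infty$, and the sign relation $\tau QP>0$ is precisely the positivity just proved.

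For part~(2), \eqref{system} is invariant under the scaling $(u,v)(r)\mapsto(\lambda^{1/2}u(\lambda r),\lambda^{1/2}v(\lambda r))$ and under $(u,v)\mapsto(-u,-v)$, so it suffices to classify, modulo these operations, the solutions with $r^{1/2}u,r^{1/2}v\to0$ as $r\to0$. The indicial analysis of \eqref{system} at $r=0$ has roots $r^{-(S+1)}$ for $u$ and $r^{S}$ for $v$; the boundary condition kills the mode $u\sim\mathrm{const}\cdot r^{-(S+1)}$ (for $S\geq0$ it violates $r^{1/2}u\to0$), so an admissible solution must have $v\sim c\,r^{S}$ and, feeding this into the first equation, $u\sim\tfrac{\beta_1 c^3}{2(2S+1)}r^{3S+1}$ for a single real constant $c$. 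A Briot--Bouquet-type uniqueness argument at the singular point $r=0$ shows that $c$ determines the solution (in particular $c=0$ forces $u\equiv v\equiv0$), while the scaling and the sign flip act transitively on the admissible values $c\neq 0$. Since $(Q,P)$ is itself admissible — its behaviour at $r=0$, read off from part~(1), satisfies $r^{1/2}Q,r^{1/2}P\to0$ — every nontrivial admissible solution is of the form $u=\sigma\lambda^{-1/2}Q(r/\lambda)$, $v=\sigma\lambda^{-1/2}P(r/\lambda)$, which is the claim.

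The main obstacle is the quantitative part of~(1): although the symmetry argument cleanly produces the reflection identity and reduces everything to one end, proving that $\ell$ is finite and strictly positive — equivalently that $Q$ neither grows nor vanishes as $r\to\infty$ — and pinning down the exact decay exponents requires a genuine bootstrap in the $g$-equation rather than a single estimate; and the positivity statement $\tau QP>0$ needs a careful ODE (or phase-plane) argument, since the naive barrier at the boundary of the quadrant fails on one side and must be recovered through the reflection identity. Making the singular-point uniqueness in~(2) rigorous is a secondary technical point, routine once the leading-order behaviour near $r=0$ has been isolated.
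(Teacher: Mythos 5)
Your reduction to $S\geq 0$ via $(u,v,S)\mapsto(v,-u,-S-1)$ and your derivation of the reflection identity $P(r)=r^{-1}Q(1/r)$ from the Kelvin-type involution plus ODE uniqueness at $r=1$ are correct and match (in different clothing) what the paper does; the computation of $H'$ and the monotonicity of $f=r^{S+1}u$, $g=r^{-S}v$ \emph{given} positivity are also right. But the two analytic steps you defer to ``one shows'' are exactly the hard content, and your outline does not contain the ingredient needed to carry them out. The missing ingredient is that the system has an \emph{exact first integral}: in the paper's logarithmic variables $u(r)=r^{-1/2}f(\ln r)$, $v(r)=r^{-1/2}g(\ln r)$ the quantity $\mathcal E=\tfrac{\beta_1}{4}(f^4+g^4)+\beta_2 f^2g^2-(S+\tfrac12)fg$ (equivalently, $r^2H(r)-(S+\tfrac12)\,ru(r)v(r)$ in your variables) is constant, and it vanishes for $(Q,P)$ because of the choice of $a$. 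Your $H$ is only the ``potential part'' of this integral, and its one-sided differential inequalities ($u^2+v^2\lesssim r^{2S}$ above, $\gtrsim r^{-2S-2}$ below) are far too weak: they do not give positivity of $QP$ (which falls out of $\mathcal E=0$ in one line, since $(S+\tfrac12)fg$ equals a positive-definite quartic), they do not make your angle $\phi$ monotone (the sign of $\phi'$ is only determined \emph{on} the zero level set of $\mathcal E$), and they do not start your bootstrap. Concretely: from $H$ alone the best a priori decay of $g$ at infinity is logarithmic (integrating $-g'\geq 2\beta_2 r^{-1}f(1)g^2$), so $\int_1^\infty r^{4S+1}g^3\,dr$ need not converge and finiteness of $\ell=\lim f$ does not follow; whereas $\mathcal E=0$ gives the pointwise algebraic relation $(S+\tfrac12)uv\geq\tfrac{\beta_1}{4}rv^4$, hence $g\lesssim r^{-(4S+2)/3}$, which is what actually launches the bootstrap. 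The paper uses $\mathcal E=0$ again (via compactness of the level set, Poincar\'e--Bendixson, and a comparison argument) to get the exponential decay in logarithmic time that underlies the precise limits.

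Part (2) has a parallel gap. Your Briot--Bouquet/indicial argument presupposes that an admissible solution has power-like leading behavior $v\sim c\,r^S$ at the singular point $r=0$, but the hypothesis is only $r^{1/2}u,\,r^{1/2}v\to 0$, which is an $o(r^{-1/2})$ bound with no structure. Passing from this weak condition to the leading-order behavior (or to triviality) is precisely what the paper's argument supplies: the boundary condition forces $\mathcal E=0$, the zero level set is compact so the orbit is global and (by Poincar\'e--Bendixson plus angle monotonicity) homoclinic to the origin, the asymptotics then show $g/f\to0$ at one end and $f/g\to0$ at the other, so by continuity $f(t_0)=g(t_0)$ for some $t_0$, and $\mathcal E=0$ pins this common value to $0$ or $\pm a$, identifying the solution with a translate/sign-flip of $(Q,P)$. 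I recommend you keep your reflection identity and the monotone quantities $f,g$ (they give a cleaner route to $P(r)=(\tau/r)Q(1/r)$ and to the equality of the limits at $0$ and $\infty$ than the paper's), but install the conserved quantity $\mathcal E$ as the engine for positivity, for the confinement of the orbit, and for the first step of the decay bootstrap; without it the proof does not close.
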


In the special case $\beta_1=2\beta_2=1$, that is, for \eqref{criticaldirac}, we will be able to write down all the solutions explicitly, see Theorem \ref{fasterexplicit}.

The methods that we develop to prove Theorems \ref{classification}, \ref{2Dcase} and \ref{fasterexplicit} have further applications, of which we mention two. First, the methods show uniqueness up to symmetries of solutions of the form \eqref{soler} for the more general class of equations \eqref{criticaldiracgen} also in dimensions $n\geq 3$ under suitable assumptions on the nonlinearity $h$. Second, the methods allow one to classify all functions of the form \eqref{soler} which satisfy \eqref{criticaldirac} in $\R^n\setminus\{0\}$. We already mentioned one such solution in Remark \ref{singsol}. This is relevant to the spinorial analogue of the singular Yamabe problem \cite{schoen2,korevaar}. Solutions are probably of the form $r^{-(n-1)/2}$ times a periodic function of $\ln r$. This seems to be a universal feature of conformally invariant equations which, for instance, has been recently verified for a fourth order equation \cite{frankkonig}.


\subsection{First motivation: Spinorial Yamabe and Br\'ezis--Nirenberg problems}

Equations of the form \eqref{criticaldirac} appear, for instance, in the blow-up analysis of solutions of the equation
\be\label{criticaldiracmanifold}
\cD\psi=\mu\psi+\vert\psi\vert^{2^\sharp -2}\psi \qquad\mbox{on}\ M \,,
\ee
where $(M,g,\Sigma)$ is a compact spin manifold, that is, a compact Riemannian manifold $(M,g)$ carrying a spin structure $\Sigma$ \cite{jost,diracoperators}. In that case one can define a Dirac operator $\cD$ and show that its $L^2$-spectrum is discrete and composed of eigenvalues of finite multiplicities accumulating at $\pm\infty$ (see, e.g., \cite{diracoperators,jost}).

In \eqref{criticaldiracmanifold}, $\mu\in\R$ is a parameter. For $\mu=0$ the equation is referred to as the \emph{spinorial Yamabe equation} and its study has been initiated by Ammann and collaborators \cite{ammann,ammannmass,spinorialanalogue,ammannsmallest}; see also \cite{nadineboundedgeometry, nadineconformalinvariant,raulot,maalaoui} and references therein. Equation \eqref{criticaldiracmanifold} with general $\mu\in\R$ is reminiscent of the Br\'ezis--Nirenberg problem \cite{brezisnirenberg} and has been studied, for instance, in \cite{isobecritical} and \cite{bartschspinorial}.

In particular, in \cite{isobecritical} Isobe proved the spinorial analogue of Struwe's theorem \cite{struwedecomposition} for the Br\'ezis--Nirenberg problem. To describe this in more detail, we note that solutions of \eqref{criticaldiracmanifold} are critical points of the functional
\be\label{functionalmanifold}
\cL(\psi)=\frac{1}{2}\int_{M}\langle\cD\psi,\psi\rangle d\operatorname{vol_g}-\frac{\mu}{2}\int_{M}\vert\psi\vert^2d\operatorname{vol_g}-\frac{1}{2^\sharp}\int_{\R^n}\vert\psi\vert^{2^\sharp}d\operatorname{vol_g}
\ee
defined on $H^{\frac{1}{2}}(\Sigma M)$, the space of $H^{\frac{1}{2}}$-sections of the spinor bundle $\Sigma M$ of the manifold. Here $d\operatorname{vol_g}$ stands for the volume measure of $(M,g)$. Then \cite[Theorem 5.2]{isobecritical} states that any Palais--Smale sequence $(\psi_n)_{n\in\mathbb{N}}\subseteq H^{\frac{1}{2}}(\Sigma M)$ for the functional $\cL$ satisfies
\be\label{psdecomposition}
\psi_{n}=\psi_\infty+\sum^{N}_{j=1}\omega^{j}_{n}+o(1) \qquad\mbox{in $H^{\frac{1}{2}}(\Sigma M)$},
\ee
where $\psi_{\infty}$ is the weak limit of $(\psi_{n})_n$ and the $\omega^{j}_{n}$ are suitably rescaled spinors obtained by mapping solutions to \eqref{criticaldirac} to spinors on the manifold $M$. In that sense, equation \eqref{criticaldirac} that we study in this paper describes bubbles in the spinorial Yamabe and Br\'ezis--Nirenberg problems.


\subsection{Second motivation: Effective equation for graphene}

Critical Dirac equations also appear as effective models for two-dimensional physical systems related to graphene. More precisely, if $V\in C^{\infty}(\R^{2},\R)$ possesses the symmetries of a honeycomb lattice, then, as proved in \cite{FWhoneycomb}, the dispersion bands of Schr\"{o}dinger operators of the form
$$
H=-\Delta+V(x)
\qquad\text{in}\ L^2(\R^2)
$$ 
exhibit generically conical intersections (the so-called \textit{Dirac points}). This leads to the appearance of the Dirac operator as an effective operator describing, for instance, the dynamics of wave packets spectrally concentrated around such conical degeneracies. Let $u_{0}(x)=u^{\varepsilon}_{0}(x)$ be a wave packet spectrally concentrated around a Dirac point, that is,
\be\label{concentrated}
u^{\varepsilon}_{0}(x)=\sqrt{\varepsilon}(\psi_{0,1}(\varepsilon x)\Phi_{1}(x)+\psi_{0,2}(\varepsilon x)\Phi_{2}(x))
\ee
where $\Phi_{j}$, $j=1,2$, are Bloch functions at a Dirac point and the functions $\psi_{0,j}$ are some (complex) amplitudes to be determined. One expects that the solution to the nonlinear Schr\"{o}dinger equation with parameter $\kappa\in\R\setminus\{0\}$,
\be\label{gp}
\I \partial_{t}u=-\Delta u+V(x)u+\kappa\vert u\vert^{2}u \,,
\ee
with initial conditions $u_0^\epsilon$ evolves to leading order in $\epsilon$ as a modulation of Bloch functions,
\be\label{approximate}
u^{\varepsilon}(t,x)\underset{\epsilon\rightarrow0^{+}}{\sim}\sqrt{\varepsilon}\left(\psi_{1}(\varepsilon t,\varepsilon x)\Phi_{1}(x)+\psi_{2}(\varepsilon t,\varepsilon x)\Phi_{2}(x) + \mathcal{O}(\varepsilon)\right).
\ee
As suggested by Fefferman and Weinstein in \cite{wavedirac} the modulation coefficients $\psi_{j}$ satisfy the following effective Dirac system,
\begin{equation}\label{effective}
\left\{\begin{aligned}
    \partial_{t}\psi_{1}+\overline{\lambda}(\partial_{x_{1}}+\I \partial_{x_{2}})\psi_{2} &=-\I \kappa(\beta_{1}\vert\psi_{1}\vert^{2}+2\beta_{2}\vert\psi_{2}\vert^{2})\psi_{1} \,, \\
     \partial_{t}\psi_{2}+\lambda(\partial_{x_{1}}-\I \partial_{x_{2}})\psi_{1} &=-\I \kappa(\beta_{1}\vert\psi_{2}\vert^{2} + 2\beta_{2}\vert\psi_{1}\vert^{2})\psi_{2} \,,
\end{aligned}\right.
\end{equation}
with
\be\label{beta}
\beta_{1}:=\int_{Y}\vert\Phi_{1}(x)\vert^{4}\,dx=\int_{Y}\vert\Phi_{2}(x)\vert^{4}\,dx \,,
\qquad
\beta_{2}:=\int_{Y}\vert\Phi_{1}(x)\vert^{2}\vert\Phi_{2}(x)\vert^{2}\,dx \,.
\ee
Here $Y$ denotes a fundamental cell of the lattice and $\lambda\in\C\setminus\{0\}$ is a coefficient related to the potential $V$. The large, but finite, time-scale validity of the Dirac approximation has been proved in \cite{FWwaves} in the linear case $\kappa=0$ for Schwartz class intial data (\ref{concentrated}). The case of cubic nonlinearities, corresponding to \eqref{gp} with $\kappa\neq 0$, is treated in \cite{arbunichsparber} for high enough Sobolev regularity $H^{s}(\R^{2})$ with $s>3$.

For stationary solutions (that is, $\partial_t\psi_1=\partial_t\psi_2=0$) we can write the system \eqref{effective} as
$$
\left( \alpha_1(-i\partial_{x_1}) + \alpha_2(-i\partial_{x_2}) \right) 
\begin{pmatrix}
\tilde\psi_2 \\ \psi_1
\end{pmatrix}
= \frac{|\kappa|}{|\lambda|} \begin{pmatrix}
( \beta_1 |\tilde\psi_2|^2 + 2\beta_2 |\psi_1|^2)\tilde\psi_2 \\
( \beta_1 |\psi_1|^2 + 2\beta_2 |\tilde\psi_2|^2)\psi_1
\end{pmatrix}
$$
with $\alpha_1$ and $\alpha_2$ from \eqref{eq:alpha2} and with
$$
\tilde\psi_2 = - \frac{\kappa}{|\kappa|}\, \frac{\lambda}{|\lambda|}\, \psi_2 \,.
$$
Thus, we arrive at \eqref{diracsystem} for the vector $(\tilde\psi_2,\psi_1)$ with coefficients $(|\kappa|/|\lambda|)\beta_j$ instead of $\beta_j$.


\subsection{Outline of the paper}

The proof of the Theorem \ref{thm-main} is achieved in several steps. First, we rewrite \eqref{criticaldiracgen} as an integral equation, as explained in Section \ref{sec:integral}. This allows us in Section \ref{sec:weakdecay} to prove boundedness and H\"older continuity of solutions. Then the desired asymptotics are proved in Section \ref{sec:kelvin} with the help of the Kelvin transform. Higher regularity and more precise asymptotics are the content of Section \ref{sec:reghigher}. Section \ref{sec-optimizers} is devoted to the proof of the classification in Theorem \ref{classification}. The remaining two sections deal with the faster decay for excited states in the two-dimensional case, first establishing an explicit family of solutions for $\beta_1=2\beta_2=1$ and then proving existence, uniqueness and asymptotics for general $\beta_1,\beta_2>0$.


\section{An integral equation}\label{sec:integral}

We begin with a Liouville-type lemma.

\begin{lemma}\label{liouvillespinors}
Let $p\geq 1$ and assume that $\psi\in L^p(\R^n,\C^N)$ satisfies $\mathcal D\psi=0$ in $\R^n$ in the sense of distributions. Then $\psi\equiv 0$.
\end{lemma}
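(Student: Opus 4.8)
The plan is to show that a tempered distributional solution of $\cD\psi=0$ which lies in $L^p$ must vanish, by first bootstrapping to conclude $\psi$ is a smooth harmonic-type function and then invoking the classical Liouville theorem. First I would note that applying $\cD$ twice gives $\cD^2 = -\Delta \otimes \mathbbm{1}_N$, because of the anticommutation relations \eqref{anticommutation}: indeed $\cD^2 = -\sum_{j,k}\alpha_j\alpha_k \partial_j\partial_k = -\sum_j \partial_j^2 = -\Delta$ acting componentwise. Hence every component $\psi_m$ of $\psi$ satisfies $\Delta\psi_m = 0$ in the sense of distributions on $\R^n$. By Weyl's lemma, each $\psi_m$ is then (represented by) a real-analytic harmonic function on $\R^n$.

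Next I would use the $L^p$ hypothesis to kill the harmonic function. The cleanest route is the mean value property: for a harmonic function $f$ and any ball $B(x,R)$, $f(x)$ equals the average of $f$ over $B(x,R)$, so by Hölder's (or Jensen's) inequality $|f(x)| \leq |B(x,R)|^{-1/p}\|f\|_{L^p(B(x,R))} \leq |B(x,R)|^{-1/p}\|f\|_{L^p(\R^n)} = c_{n,p} R^{-n/p}\|f\|_{L^p(\R^n)}$, which tends to $0$ as $R\to\infty$ since $p\geq 1$ and $n\geq 2$. Thus $f\equiv 0$, and applying this to each component gives $\psi\equiv 0$. An alternative for the last step, if one prefers to stay in the distributional category, is to take the Fourier transform: $\cD\psi=0$ becomes $(\boldsymbol\alpha\cdot\xi)\,\widehat\psi(\xi)=0$, and since $(\boldsymbol\alpha\cdot\xi)^2=|\xi|^2\mathbbm{1}_N$ the matrix $\boldsymbol\alpha\cdot\xi$ is invertible for $\xi\neq 0$, so $\widehat\psi$ is supported at the origin, hence $\psi$ is a polynomial; a polynomial in $L^p(\R^n)$ is zero.

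The only mildly delicate point — the ``main obstacle'', such as it is — is justifying that the distributional identity $\cD\psi=0$ for $\psi\in L^p$ (with $p$ possibly larger than $2$, so $\psi$ need not be tempered a priori unless one restricts attention to its local behaviour) does legitimately pass to $\Delta\psi_m=0$ and that Weyl's lemma applies; this is purely a matter of testing against $\cD\phi$ for $\phi\in C^\infty_c$ and noting $\cD^2\phi = -\Delta\phi$, so $\int \psi_m \,\Delta\bar\phi\,dx$-type pairings vanish, which is routine. Everything else is a direct appeal to standard elliptic regularity and the classical Liouville theorem.
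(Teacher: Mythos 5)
Your proposal is correct and follows essentially the same route as the paper: apply $\cD$ twice using the anticommutation relations to conclude each component of $\psi$ is a distributional harmonic function, then invoke the Liouville theorem for $L^p$ harmonic functions (which the paper simply cites, while you supply the standard mean-value-plus-H\"older argument). No gaps.
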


\begin{proof}
The corresponding result for scalar functions $u$ satisfying $\Delta u=0$ is well-known; see, e.g., \cite{GrWu}. The present result can be obtained by a simple modification of those arguments. An even simpler way is to deduce it directly from the scalar result. Namely, if $\phi\in C_c^\infty(\R^n,\C^N)$, then, since $\mathcal D\psi=0$ in the sense of distributions and since $\mathcal D\phi\in C_c^\infty(\R^n,\C^N)$,
$$
-\int_{\R^N} \langle \Delta\phi, \psi\rangle \,dx = \int_{\R^N} \langle \mathcal D^2\phi, \psi\rangle \,dx 
= \int_{\R^N} \langle \mathcal D(\mathcal D\phi),\psi\rangle \,dx= 0 \,.
$$
This means that $\Delta\psi=0$ in the sense of distributions. Applying the scalar result to each component of $\psi$, we obtain the assertion.
\end{proof}

We now use the above lemma to rewrite \eqref{criticaldirac} as an integral equation. The \emph{Green's function} $\Gamma$ of the Dirac operator $\cD$ is given by
\be\label{greendirac}
\Gamma(x-y)=\frac{\I}{|\Sph^{n-1}|}\boldsymbol{\alpha}\cdot\frac{x-y}{\vert x-y\vert^{n}} \,,
\ee
where $\boldsymbol{\alpha}=(\alpha_{j})^{n}_{j=1}$. One easily checks that this function satisfies for each fixed $y\in\R^n$ the equation
\be\label{greenequation}
\cD_x\Gamma(x-y)=\delta(x-y)I_{N}
\qquad\text{in}\ \R_x^n
\ee
in the sense of distributions.

\begin{lemma}\label{integraleq}
If $\psi\in L^{2^\#}(\R^n,\C^N)$ solves \eqref{criticaldiracgen} in the sense of distributions, then
$$
\psi=\Gamma\ast(h(\cdot,\psi)\psi) \,.
$$
\end{lemma}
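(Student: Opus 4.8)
The plan is to show that the difference $\eta := \psi - \Gamma*(h(\cdot,\psi)\psi)$ vanishes by verifying that it lies in a Lebesgue space and is annihilated by $\cD$, so that Lemma~\ref{liouvillespinors} applies. The first task is therefore to make sense of the convolution $\Gamma*(h(\cdot,\psi)\psi)$ and to identify the space it belongs to. By assumption \eqref{eq:hass}, $h(x,\psi)\psi$ is pointwise bounded by a constant times $|\psi|^{2^\#-1}$; since $\psi\in L^{2^\#}(\R^n,\C^N)$ and $2^\# = \tfrac{2n}{n-1}$, the function $g:=h(\cdot,\psi)\psi$ belongs to $L^{q}(\R^n,\C^N)$ with $q = \tfrac{2^\#}{2^\#-1} = \tfrac{2n}{n+1}$. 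The kernel $\Gamma$ has the homogeneity of $|x|^{-(n-1)}$, i.e.\ $\Gamma\in L^{r,\infty}(\R^n)$ with $\tfrac1r = \tfrac{n-1}{n}$. The Hardy--Littlewood--Sobolev inequality (in its weak-type / O'Neil form) then gives $\Gamma*g\in L^{s}(\R^n,\C^N)$ where $\tfrac1s = \tfrac1q + \tfrac1r - 1 = \tfrac{n+1}{2n} + \tfrac{n-1}{n} - 1 = \tfrac1{2^\#}$, so $\Gamma*g\in L^{2^\#}(\R^n,\C^N)$, the same space as $\psi$. Hence $\eta\in L^{2^\#}(\R^n,\C^N)$.

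Next I would check that $\cD\eta = 0$ in the sense of distributions. By \eqref{greenequation}, $\cD_x\Gamma(x-y) = \delta(x-y)I_N$, so formally $\cD(\Gamma*g) = g$; rigorously, for $\phi\in C_c^\infty(\R^n,\C^N)$ one writes, using Fubini (justified by the integrability just established and $\cD\Gamma(x-\cdot)$ acting on the smooth compactly supported test function after an integration by parts),
\[
\int_{\R^n}\langle \cD\phi,\ \Gamma*g\rangle\,dx
= \int_{\R^n}\Big\langle \int_{\R^n}\Gamma(x-y)^*\,\cD\phi(x)\,dx,\ g(y)\Big\rangle\,dy
= \int_{\R^n}\langle\phi(y),g(y)\rangle\,dy,
\]
where the last step uses that $\Gamma(x-y)^*=-\Gamma(y-x)$ (anti-Hermitian, since the $\alpha_j$ are Hermitian and of the stated off-diagonal/pure-imaginary form) together with \eqref{greenequation} in the form $\int \langle\cD_x[\Gamma(x-y)\phi(x)]\rangle$-type pairing. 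Comparing with the weak formulation \eqref{weaksolution}, which states $\int\langle\cD\phi,\psi\rangle = \int\langle\phi,g\rangle$ for all such $\phi$, we get $\int\langle\cD\phi,\eta\rangle = 0$ for all $\phi\in C_c^\infty(\R^n,\C^N)$, i.e.\ $\cD\eta=0$ distributionally. Lemma~\ref{liouvillespinors} with $p=2^\#$ then forces $\eta\equiv 0$, which is the claim.

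The main obstacle is the convergence and bookkeeping in the second step: one must confirm that $\Gamma*g$ is a genuine (a.e.-defined, locally integrable) function so that the distributional identity $\cD(\Gamma*g)=g$ holds, and that the transpose/conjugate relation for $\Gamma$ is the right one to match the pairing in \eqref{weaksolution} — a sign or a conjugation slip here would be fatal. The endpoint nature of the HLS exponents ($q,r,s$ all critical) is only a mild nuisance: the weak-type kernel estimate $\Gamma\in L^{n/(n-1),\infty}$ combined with $g\in L^{2n/(n+1)}$ still lands in the strong space $L^{2^\#}$ because neither of the two input exponents is itself an endpoint of the form $1$ or $\infty$, so the generalized Young/HLS inequality applies without loss. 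Once these points are secured, the argument is complete.
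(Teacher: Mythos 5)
Your argument is the same as the paper's: put $h(\cdot,\psi)\psi$ in $L^{2n/(n+1)}$ via \eqref{eq:hass}, use $\Gamma\in L^{n/(n-1),\infty}$ and the weak Young inequality to get $\Gamma*(h(\cdot,\psi)\psi)\in L^{2^\#}$, verify $\cD(\Gamma*g)=g$ distributionally, and invoke Lemma~\ref{liouvillespinors}. One small correction to your parenthetical: since $\Gamma$ is anti-Hermitian \emph{and} odd, the relevant identity is $\Gamma(x-y)^*=\Gamma(y-x)$ (not $-\Gamma(y-x)$); with that, your displayed computation is exactly right.
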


\begin{proof}
We note that $\Gamma\in L^{\frac{n}{n-1},\infty}$. The assumptions $\psi\in L^{2^{\sharp}}$ and \eqref{eq:hass} imply that $h(\cdot,\psi)\psi\in L^{\frac{2n}{n+1}}$ and therefore, by the weak Young inequality,
$$
\tilde\psi:=\Gamma\ast(h(\cdot,\psi)\psi)
$$
satisfies
$$
\tilde\psi\in L^{2^{\sharp}}(\R^n,\C^N) \,.
$$
Moreover, it is easy to see that
$$
\mathcal D\tilde\psi = h(x,\psi)\psi
\qquad\text{in}\ \R^n
$$
in the sense of distributions. This implies that
$$
\mathcal D(\psi-\tilde\psi)=0
\qquad\text{in}\ \R^n
$$
in the sense of distributions and therefore, by Lemma \ref{liouvillespinors}, $\psi-\tilde\psi=0$, as claimed.
\end{proof}


\section{Boundedness and regularity}\label{sec:weakdecay}

Throughout this section we consider a distributional solution $\psi\in L^{2^\#}(\R^n,\C^N)$ of \eqref{criticaldiracgen}. Our goal is to show that $\psi\in L^\infty(\R^n)\cap C^{0,\alpha}(\R^n)$ for any $\alpha<1$.

\begin{proposition}\label{bounded}
$\psi\in L^\infty(\R^n,\C^N)$
\end{proposition}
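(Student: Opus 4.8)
The plan is to run a bootstrap argument on the integral equation $\psi = \Gamma * (h(\cdot,\psi)\psi)$ from Lemma~\ref{integraleq}, exploiting the fact that $\Gamma \in L^{n/(n-1),\infty}$ and that the nonlinearity is homogeneous of degree $2^\# - 1 = (n+1)/(n-1)$ in $\psi$. The starting point is $\psi \in L^{2^\#} = L^{2n/(n-1)}$. Since the argument is local-to-global, the cleanest route is the standard Brezis--Kato / Trudinger-type iteration: assume $\psi \in L^q$ for some $q \geq 2^\#$; then by \eqref{eq:hass} one has $h(\cdot,\psi)\psi \in L^{q/(2^\#-1)}$, and the weak Young inequality applied to convolution with $\Gamma \in L^{n/(n-1),\infty}$ gives $\psi = \Gamma * (h(\cdot,\psi)\psi) \in L^{q_1}$ where $\frac{1}{q_1} = \frac{2^\#-1}{q} + \frac{n-1}{n} - 1 = \frac{2^\#-1}{q} - \frac{1}{n}$. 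The key arithmetic check is that the map $q \mapsto q_1$ strictly increases $q$ as long as $q$ is below the fixed point; one computes that the relevant fixed point is $q = \infty$ (equivalently $\frac{1}{q_1} < \frac{1}{q}$ precisely when $\frac{2^\#-2}{q} < \frac{1}{n}$, i.e. $q > n(2^\#-2) = \frac{2n}{n-1} = 2^\#$), so starting just above $2^\#$ the exponents march off to infinity.

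The main obstacle — and it is a genuine one, not a formality — is that the iteration map $q \mapsto q_1$ has $q = 2^\#$ itself as a \emph{fixed point} (there $\frac{1}{q_1} = \frac{2^\#-1}{2^\#} - \frac{1}{n} = \frac{n+1}{2n} - \frac{1}{n} = \frac{n-1}{2n} = \frac{1}{2^\#}$), so the naive scheme does not even get off the ground: one cannot gain integrability starting exactly from the critical space. This is the standard difficulty with critical equations, and the way around it is the truncation trick attributed to \cite{jannellisolimini} and mentioned in the introduction. Concretely, for $L>0$ one writes $h(\cdot,\psi)\psi = f_L + g_L$ where $f_L := \mathbbm{1}_{\{|\psi| \leq L\}}\, h(\cdot,\psi)\psi$ is bounded in terms of $L$ and lies in every $L^p$ with $p \geq 2n/(n+1)$ (since $\psi \in L^{2^\#}$ controls it), while $g_L := \mathbbm{1}_{\{|\psi|>L\}}\, h(\cdot,\psi)\psi$ has small $L^{2n/(n+1)}$-norm once $L$ is large, by dominated convergence and \eqref{eq:hass}. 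Splitting $\psi = \Gamma*f_L + \Gamma*g_L$ accordingly, the first term already lies in $L^q$ for a range of $q$ strictly larger than $2^\#$ thanks to the boundedness of $f_L$, and the second term is handled by absorbing it: in the $L^q$-estimate for $\psi$ the contribution of $g_L$ carries the small factor, which can be moved to the left-hand side. This breaks the fixed point and yields $\psi \in L^{q_0}$ for some $q_0 > 2^\#$; after that, the clean bootstrap above applies and one reaches $\psi \in L^p$ for all large $p$.

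Finally, to pass from ``$\psi \in L^p$ for all $p < \infty$'' to $\psi \in L^\infty$, I would again use the integral representation: once $h(\cdot,\psi)\psi \in L^r$ for some $r > n$ (which holds as soon as $\psi \in L^{r(2^\#-1)}$ with $r(2^\#-1)$ finite and large), convolution with $\Gamma \in L^{n/(n-1),\infty} \subset L^1_{\loc} + L^\infty_{\loc}$-type bounds — more precisely the Hölder/Young estimate $\|\Gamma * g\|_\infty \leq \|\Gamma\|_{L^{r',\infty}}\|g\|_{L^r}$ valid when $r > n$ so that $r' < n/(n-1)$ and $\Gamma \in L^{r',\infty}$ locally, combined with a splitting of $\Gamma$ into its near-singularity part and its tail — gives $\psi \in L^\infty$. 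Care is needed here because $\Gamma$ itself is not in $L^{r',\infty}(\R^n)$ globally for $r' < n/(n-1)$; one resolves this by writing $\Gamma = \Gamma\mathbbm{1}_{\{|x|\leq 1\}} + \Gamma\mathbbm{1}_{\{|x|>1\}}$, estimating the convolution with the first piece in $L^\infty$ via the high-integrability of $h(\cdot,\psi)\psi$ and that with the second piece via $\Gamma\mathbbm{1}_{\{|x|>1\}} \in L^{2n/(n+1)} \cap L^\infty$ against $h(\cdot,\psi)\psi \in L^{2n/(n+1)} \cap L^{\text{large}}$. This completes the proof of Proposition~\ref{bounded}; the subsequent $C^{0,\alpha}$ statement will then follow from the same representation and standard singular-integral estimates, but that is the content of a later result.
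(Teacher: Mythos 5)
You have correctly identified the critical obstruction (the bootstrap map fixes $q=2^\#$) and the correct remedy in spirit: the Jannelli--Solimini splitting of the nonlinearity into a bounded piece and a piece that is small in the norm that pairs with $\Gamma$. Your final passage from high integrability to $L^\infty$, by splitting $\Gamma$ into its near-singularity part and its tail, is essentially identical to the paper's Step 3. However, there is a genuine gap at the central absorption step. To bound the contribution of $g_L$ you need an estimate of the form $\|\Gamma * g_L\|_q \leq \varepsilon(L)\,\|\psi\|_q$ (via H\"older, $\|g_L\|_s \leq \|\mathbf{1}_{\{|\psi|>L\}}|\psi|^{2^\#-2}\|_n\,\|\psi\|_q$ with $1/s=1/q+1/n$, followed by weak Young), and you then "move the small factor to the left-hand side" in $\|\psi\|_q \leq \|\Gamma*f_L\|_q + \varepsilon\,\|\psi\|_q$. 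This subtraction is legitimate only if $\|\psi\|_q<\infty$ is already known --- which is precisely what is being proved, so the argument as written is circular. (The H\"older step itself already presupposes $\psi\in L^q$.)

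The paper's proof is structured exactly to avoid this. It works in duality, introducing $S_M=\sup\{|\int\langle\phi,\psi\rangle\,dx| : \|\phi\|_{r'}\leq 1,\ \|\phi\|_{q'}\leq M\}$; the auxiliary constraint $\|\phi\|_{q'}\leq M$ forces $S_M\leq M\|\psi\|_{q}<\infty$ a priori, so the absorption inequality $S_M\leq C''+\tfrac12 S_M$ genuinely yields $S_M\leq 2C''$, and since this bound is uniform in $M$, duality gives $\psi\in L^r$. To repair your version you need an analogous device --- either this dual formulation or a truncation of $\psi$ itself producing uniformly bounded approximations. A second, more minor point: "the exponents march off to infinity" requires care, since the weak Young inequality is inapplicable once the output exponent reaches the borderline, and the recursion can land exactly on $r=n(n+1)/(n-1)$; the paper handles this case by perturbing the exponent slightly in its Step 4.
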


\begin{proof}
We denote $q=2^\#$.

\emph{Step 1.} We show that $\psi\in L^r$ for any $q\leq r<\infty$ with $r^{-1}\geq q^{-1}-n^{-1}$. (More explicitly, $\psi\in L^r$ for any $q\leq r<\infty$ if $n=2,3$ and $\psi\in L^r$ for any $q\leq r\leq 2n/(n-3)$ if $n\geq 4$.)

To prove this assertion we show that there is a constant $C>0$ such that for any $M>0$ we have
\begin{equation}
\label{eq:goalstrong}
S_M := \sup\left\{ \left|\int_{\R^n} \langle\phi,\psi\rangle\,dx \right|:\ \|\phi\|_{r'}\leq 1 \,,\ \|\phi\|_{q'}\leq M \right\} \leq C \,.
\end{equation}
This implies that
$$
\sup\left\{ \left|\int_{\R^n} \langle\phi,\psi\rangle\,dx \right|:\ \|\phi\|_{r'}\leq 1 \,,\ \phi\in L^{q'} \right\} \leq C \,,
$$
and therefore, by density and duality, $\psi\in L^{r}$, as claimed.

We now fix $M>0$ and aim at proving \eqref{eq:goalstrong}. Moreover, let $\eps>0$ be a parameter to be specified later. We claim that there is a function $f_\epsilon$ on $\R^n$, taking values in the $N\times N$ matrices, which is bounded and supported on a set of finite measure and satisfies
$$
\| h(\cdot,\psi(\cdot)) - f_\epsilon \|_n \leq \epsilon \,.
$$
Indeed, we set $f_\epsilon(x) = h(x,\psi(x))\mathbf{1}_{\{\delta\leq |\psi(x)|\leq \mu\}}$ for some $0<\delta\leq\mu<\infty$. Then
$$
\| h(x,\psi(x)) - f_\epsilon(x) \| \leq C |\psi(x)|^{2^\#-2} \left( \mathbf{1}_{\{|\psi(x)|<\delta\}} + \mathbf{1}_{\{|\psi(x)|> \mu\}} \right)
\qquad\text{for all}\ x\in\R^n
$$
and therefore
$$
\| h(\cdot,\psi(\cdot)) - f_\epsilon \|_n^n \leq C^n \left( \int_{\{|\psi(x)|<\delta\}}  |\psi(x)|^{2^\#}\,dx + \int_{\{|\psi(x)|> \mu\}}  |\psi(x)|^{2^\#}\,dx \right).
$$
Since $\psi\in L^{2^\#}$, the right side is $\leq\epsilon^n$ if $\delta>0$ is sufficiently small and $\mu<\infty$ is sufficiently large. Moreover, by \eqref{eq:hass}, $\|f_\epsilon(x)\|\leq C \mu^{2^\#-2}$ for all $x\in\R^n$ and
$$
|\{ x\in\R^n:\ \|f_\epsilon(x)\| >0\}| \leq |\{ x\in\R^n:\ |\psi(x)|\geq\delta\}|<\infty \,,
$$
proving the claimed properties.

We set $g_{\eps}:= h(\cdot,\psi(\cdot))-f_{\eps}$. Let $\phi\in L^{r'}\cap L^{q'}$ with $\|\phi\|_{r'}\leq 1$ and $\|\phi\|_{q'}\leq M$. We claim that
\begin{equation}
\label{eq:duality}
\int_{\R^n} \langle\phi,\psi\rangle \,dx = \int_{\R^n}\langle\phi, (\Gamma*(f_{\eps}\psi))\rangle\,dx + \int_{\R^n} \langle\chi_\eps,\psi\rangle\,dx
\end{equation}
with
$$
\chi_\eps:= h^* \Gamma*(g_{\eps}^*(\Gamma*\phi)) \,.
$$
Indeed, by the integral equation from Lemma \ref{integraleq} we have
$$\int_{\R^n} \langle\phi,\psi\rangle \,dx = \int_{\R^n}\langle\phi, (\Gamma*(f_{\eps}\psi))\rangle\,dx + \int_{\R^n}\langle\phi,(\Gamma*(g_{\eps}\psi))\rangle\,dx \,.$$
Using Fubini's theorem and the fact that for all $x,y\in\R^n$ with $x\neq y$, $\Gamma(x-y)$ is an anti-Hermitian matrix one rewrites the second term on the right side as follows,
\begin{align*}
\int_{\R^n}\langle\phi,\Gamma\ast(g_{\eps} \psi)\rangle\, dx
& =\int_{\R^n}\langle\phi(x),\int_{\R^n}\Gamma(x-y)(g_{\eps}(y) \psi(y))\, dy\rangle \,dx\\
&=\int_{\R^n}dy\int_{\R^n}dx\, \langle\phi (x),\Gamma(x-y)(g_{\eps} (y)\psi(y))\rangle\\
&=-\int_{\R^n}dy\int_{\R^n}dx\, \langle \Gamma(x-y)\phi(x),g_{\eps} (y)\psi(y)\rangle \\
&=-\int_{\R^n}dy\, \langle\int_{\R^n}\Gamma(x-y)\phi (x)\,dx, g_{\eps} (y)\psi(y)\rangle\\
&=\int_{\R^n}\langle(\Gamma\ast\phi) (y), g_{\eps} (y)\psi(y)\rangle \,dy \,.
\end{align*}
Using $\psi=\Gamma\ast(h \psi)$ in the last integral and applying the same argument as above, we obtain \eqref{eq:duality}.

We now estimate the two integrals appearing on the right side of \eqref{eq:duality}. We define $s$ by $s^{-1}=r^{-1}+n^{-1}$ and note that by assumption $1< s\leq q$. Using the H\"older inequality and the weak Young inequality we can estimate the first term on the right side of \eqref{eq:duality} as follows,
\begin{align*}
\left| \int_{\R^n}\langle\phi, (\Gamma*(f_\eps\psi))\rangle\,dx \right| & \leq \|\phi\|_{r'} \|\Gamma*(f_\eps\psi)\|_{r} \lesssim \|\phi\|_{r'} \|\Gamma\|_{n/(n-1),\infty} \|f_\eps\psi\|_{s} \\
& \leq \|\phi\|_{r'} \|\Gamma\|_{n/(n-1),\infty} \|f_\eps\|_{qs/(q-s)} \|\psi\|_q \,.
\end{align*}
Thus,
\begin{equation}
\label{eq:firstterm}
\left| \int_{\R^n}\langle\phi,(\Gamma*(f_\eps\psi))\rangle\,dx \right| \leq C_\epsilon \,,
\end{equation}
where $C_\eps$ depends, besides on $\eps$, only on $n$, $r$ and $\psi$ but not on $M$. 

We now claim that
\begin{equation}
\label{eq:goal2strong}
\|\chi_\eps\|_{r'} \leq C' \|g_{\eps}\|_{n} \|\phi\|_{r'} \,,
\qquad
\|\chi_\eps\|_{q'} \leq C' \|g_{\eps}\|_{n} \|\phi\|_{q'}
\end{equation}
with a constant $C'$ depending only on $n$ and $\|\psi\|_q$. Once this is shown, we infer from the definition of $S_M$ that
$$
\left| \int_{\R^n} \langle\chi_\eps,\psi\rangle\,dx \right| \leq C' \|g_{\eps}\|_{n} S_M \,.
$$
We now choose $\epsilon=1/(2C')$ and recall that $\|g_{\eps}\|_{n}\leq\epsilon$. In view of \eqref{eq:duality} and \eqref{eq:firstterm} we obtain
$$
\left| \int_{\R^n} \langle\phi,\psi\rangle\,dx \right|\leq C'' + \frac12 S_M \,,
$$
where $C''$ is $C_\epsilon$ with the above choice of $\epsilon$. Taking the supremum over all $\phi$, we obtain
$$
S_M \leq C'' +\frac12 S_M \,.
$$
We note that $S_M<\infty$ (in fact, since $\psi\in L^q$, we have $S_M\leq M \|\psi\|_q$). Therefore, the above inequality yields $S_M \leq 2C''$, which proves \eqref{eq:goalstrong}.

Thus, it remains to show \eqref{eq:goal2strong}. We have
\begin{align*}
\|\chi_\eps\|_{r'} & \leq \| h \|_{n} \|\Gamma*(g_\eps^*(\Gamma*\phi))\|_{s'}
\lesssim \| |\psi|^{q-2} \|_{n} \|\Gamma\|_{n/(n-1),\infty} \|g_\eps^*(\Gamma*\phi)\|_{r'} \\
& \leq \| |\psi|^{q-2} \|_{n} \|\Gamma\|_{n/(n-1),\infty} \|g_\eps\|_{n} \|\Gamma*\phi \|_{s'} \lesssim \| |\psi|^{q-2} \|_{n} \|\Gamma\|_{n/(n-1),\infty}^2 \|g_\eps\|_{n} \|\phi \|_{r'} \,.
\end{align*}
This proves the first inequality in \eqref{eq:goal2strong}. Similarly, we have
\begin{align*}
\|\chi_\eps\|_{q'} & \leq \| h \|_{n} \|\Gamma*(g_{\eps}^*(\Gamma*\phi))\|_{q}
\lesssim \| |\psi|^{q-2} \|_{n} \|\Gamma\|_{n/(n-1),\infty} \|g_{\eps}^*(\Gamma*\phi)\|_{q'} \\
& \leq \| |\psi|^{q-2} \|_{n} \|\Gamma\|_{n/(n-1),\infty} \|g_{\eps}\|_{n} \|\Gamma*\phi \|_{q} \lesssim \| |\psi|^{q-2} \|_{n} \|\Gamma\|_{n/(n-1),\infty}^2 \|g_{\eps}\|_{n} \|\phi \|_{q'} \,.
\end{align*}
This proves the second inequality in \eqref{eq:goal2strong} and concludes the proof of Step 1.

\emph{Step 2.} We show that if $\psi\in L^r$ for some $q<r<n(n+1)/(n-1)$, then $\psi\in L^s$ for $1/s= (n+1)/((n-1)r)- 1/n$.

Indeed, the assumption $\psi\in L^r$ implies that $h(\cdot,\psi)\psi\in L^{(n-1)r/(n+1)}$ and therefore, by the weak Young inequality, $\Gamma*(h(\cdot,\psi)\psi)\in L^s$, where $s$ is defined as above. (The weak Young inequality is applicable since $(n+1)/((n-1)r)-1/n>0$ by the assumed upper bound on $r$.) By Lemma \ref{integraleq} we obtain $\psi\in L^s$, as claimed.

\emph{Step 3.} We show that if $\psi\in L^r$ for some $r>n(n+1)/(n-1)$, then $\psi\in L^\infty$.

Indeed, the assumption implies that $h(\cdot,\psi)\psi\in L^s$ for $s=(n-1)r/(n+1)>n$. On the other hand, since $\psi\in L^{q}$, $h(\cdot,\psi)\psi\in L^{2n/(n+1)}$ and $2n/(n+1)<n$. Thus, writing $\Gamma$ as the sum of a function in $L^{s'}$ and one in $L^{2n/(n-1)}$, we obtain the assertion by H\"older's inequality.

\emph{Step 4.} Let us complete the proof of the proposition.

First assume $n=2,3$. Then according to Step 1, $\psi\in L^r$ for any $r<\infty$ and therefore, by Step 3, $\psi\in L^\infty$.

Now let $n\geq 4$. Define $r_1^{-1}=(n-3)/(2n)$ and inductively $r_{j+1}^{-1} = (n+1)/((n-1)r_j) - 1/n$ for $j\geq 1$. It is elementary to check that $(r_j^{-1})$ is a strictly decreasing sequence which tends to $-\infty$. Thus there is a largest $j$, say $J$, such that $r_j<n(n+1)/(n-1)$. By Step 1, $\psi\in L^{r_1}$ and, by applying Step 2 repeatedly, $\psi\in L^{r_{J+1}}$. Note that $r_{J+1}\geq n(n+1)/(n-1)$. If this inequality is strict, we infer from Step 3 that $\psi\in L^\infty$. Finally, if $r_{J+1}=n(n+1)/(n-1)$ we apply Step 2 with $r^{-1}=r_{J+1}^{-1}+\epsilon$ where $0<\epsilon<(n-1)^2/(n(n+1)^2)$. Then
$$
\frac 1s=\frac{n+1}{(n-1)r}-\frac 1n = \frac{n+1}{(n-1)r_{J+1}} + \frac{(n+1)\epsilon}{n-1} -\frac 1n = \frac{(n+1)\epsilon}{n-1}<\frac{n-1}{n(n+1)}
$$
and therefore $\psi\in L^s$ with $s>n(n+1)/(n-1)$. By Step 3, this implies again $\psi\in L^\infty$.
\end{proof}

\begin{proposition}\label{holder}
For any $\alpha<1$, $\psi\in C^{0,\alpha}(\R^n,\C^N)$.
\end{proposition}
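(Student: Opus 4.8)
The plan is to combine the integral equation $\psi=\Gamma*g$ of Lemma \ref{integraleq}, where $g:=h(\cdot,\psi)\psi$, with the boundedness just proved in Proposition \ref{bounded}. By \eqref{eq:hass} one has $|g(x)|\le C|\psi(x)|^{2^\#-1}$, hence $g\in L^\infty(\R^n)$ (because $\psi\in L^\infty$) and $g\in L^{2n/(n+1)}(\R^n)$ (because $\psi\in L^{2^\#}$); these are the only two facts about $g$ that will be used.

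Next I split the Green's function. Fix $\chi\in C_c^\infty(\R^n)$ with $\chi\equiv1$ on the unit ball and $\supp\chi$ contained in the ball of radius $2$, and write $\Gamma=\Gamma_1+\Gamma_2$ with $\Gamma_1=\chi\Gamma$ and $\Gamma_2=(1-\chi)\Gamma$. From \eqref{greendirac} one has $|\Gamma(z)|\lesssim|z|^{-(n-1)}$ and $|\nabla\Gamma(z)|\lesssim|z|^{-n}$ for $z\neq0$. Hence $\Gamma_1$ is compactly supported with $\Gamma_1\in L^1(\R^n)$ and $|\nabla\Gamma_1(z)|\lesssim|z|^{-n}\mathbf{1}_{\{|z|\le2\}}$ for $z\neq0$, while $\Gamma_2\in C^\infty(\R^n)$ with $\Gamma_2$ and $\nabla\Gamma_2$ bounded and decaying at infinity like $|z|^{-(n-1)}$ and $|z|^{-n}$ respectively, so that $\Gamma_2,\nabla\Gamma_2\in L^{2n/(n-1)}(\R^n)$. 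I then write $\psi=\psi_1+\psi_2$ with $\psi_j:=\Gamma_j*g$ and treat the two pieces separately.

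For $\psi_2$: since $\tfrac{n-1}{2n}+\tfrac{n+1}{2n}=1$, Young's inequality gives $\psi_2\in L^\infty$, and applying the same Young estimate to $\nabla\Gamma_2$ together with the fundamental theorem of calculus gives $|\psi_2(x)-\psi_2(x')|\lesssim|x-x'|$, i.e. $\psi_2$ is globally Lipschitz. For $\psi_1$: Young's inequality (with $\Gamma_1\in L^1$, $g\in L^\infty$) gives $\psi_1\in L^\infty$, and for $x,x'$ with $\delta:=|x-x'|\le\tfrac12$ one has $|\psi_1(x)-\psi_1(x')|\le\|g\|_\infty\int_{\R^n}|\Gamma_1(z)-\Gamma_1(z+(x-x'))|\,dz$. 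Splitting this integral at $|z|=2\delta$: on $\{|z|<2\delta\}$ the integrand is $\lesssim|z|^{-(n-1)}+|z+(x-x')|^{-(n-1)}$ and contributes $O(\delta)$; on $\{|z|\ge2\delta\}$ the mean value theorem and the bound on $\nabla\Gamma_1$ give an integrand $\lesssim\delta|z|^{-n}\mathbf{1}_{\{|z|\le3\}}$, contributing $O(\delta\log(1/\delta))$. Thus $|\psi_1(x)-\psi_1(x')|\lesssim\|g\|_\infty\,\delta\log(1/\delta)$, which for every $\alpha<1$ is bounded by $C_\alpha\delta^\alpha$ for small $\delta$; together with $\psi_1\in L^\infty$ this yields $\psi_1\in C^{0,\alpha}(\R^n)$ for all $\alpha<1$. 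Adding the two contributions gives $\psi\in C^{0,\alpha}(\R^n)$ for all $\alpha<1$, as claimed.

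The only genuinely delicate point is the estimate for the local part $\psi_1$, where $\Gamma_1$ is a kernel of critical Riesz-potential type (order $1$): the far piece of the difference integral produces the borderline factor $\delta\log(1/\delta)$, which is precisely why one obtains every exponent $\alpha<1$ but not Lipschitz regularity from this term, and this would persist even under stronger integrability of $g$. Everything else is routine bookkeeping of Lebesgue exponents.
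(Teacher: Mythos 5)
Your argument is correct and follows the same route as the paper: the paper notes that $h(\cdot,\psi)\psi\in L^p$ for all $2n/(n+1)\leq p\leq\infty$ and then invokes ``standard mapping properties of Riesz potentials'' applied to $\psi=\Gamma*(h(\cdot,\psi)\psi)$, which is precisely the mapping property you prove in detail via the near/far splitting of $\Gamma$. Your write-up simply opens that black box (correctly, including the borderline $\delta\log(1/\delta)$ modulus from the local piece), so there is nothing to add.
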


\begin{proof}
By Proposition \ref{bounded} and the assumption $\psi\in L^{2^\#}$, we have $h(\cdot,\psi)\psi \in L^p$ for any $2n/(n+1)\leq p\leq \infty$. Therefore, by standard mapping properties of Riesz potentials, $\psi=\Gamma*(h(\cdot,\psi) \psi)\in C^{0,\alpha}$ for any $\alpha<1$.
\end{proof}


\section{The Kelvin transform and proof of Theorem \ref{thm-main}}\label{sec:kelvin}

We recall the definition of the unitary matrices $\mathcal{U}(x)$, $x\in\R^n\setminus\{0\}$, from \eqref{eq:defu}. Given a $\C^N$-valued function $\psi$ on $\R^n$, we define the \emph{Kelvin transform} for spinors
\be\label{kelvin}
\psi_\cK(x):= |x|^{-(n-1)} \mathcal{U}(x) \psi(x/|x|^2)
\qquad\text{for}\ x\in\R^n\setminus\{0\} \,.
\ee

Its basic properties are contained in the following

\begin{lemma}\label{inversiondirac} There holds
\be\label{dkelvin}
\mathcal{D} \psi_\cK = |x|^{-2} (\mathcal D\psi)_\cK \qquad\mbox{on $\R^n\setminus\{0\}$}\,.
\ee
Moreover
\be\label{quadkelvin}
\int_{\R^n} \langle\psi_\cK(x), (\mathcal D\psi_\cK)(x)\rangle\,dx = \int_{\R^n} \langle\psi(x),(\mathcal D\psi)(x)\rangle\,dx\,
\ee
and
\be\label{normkelvin}
 \int_{\R^n} |\psi_\cK(x)|^{2^\#}\,dx = \int_{\R^n} |\psi(x)|^{2^\#} \,dx\,.
\ee
\end{lemma}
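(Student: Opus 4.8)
The plan is to prove \eqref{dkelvin} by a direct differentiation and then to deduce \eqref{quadkelvin} and \eqref{normkelvin} from it, using that $\mathcal U(x)$ is unitary and that the inversion $x\mapsto x/|x|^2$ has Jacobian of modulus $|x|^{-2n}$.

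The point of \eqref{dkelvin} is to re-express $\mathcal U(x)$ in terms of the Clifford generators. Let $\mathcal{G}=\mathrm{diag}(I_{N/2},-I_{N/2})$ be the ``chirality'' matrix; it is Hermitian, satisfies $\mathcal{G}^2=I_N$, and anticommutes with every $\alpha_j$. A short check with the conventions of Section~\ref{sec-intro} (uniformly in $n\geq 2$) gives
$$
\mathcal U(x) = \I\,|x|^{-1}\,(\boldsymbol\alpha\cdot x)\,\mathcal{G} \,,\qquad x\neq 0 \,,
$$
so that, recalling the formula \eqref{greendirac} for $\Gamma$, for any $\C^N$-valued $\psi$,
$$
\psi_\cK(x) = |\Sph^{n-1}|\,\Gamma(x)\,\mathcal{G}\,\psi(x/|x|^2) \,.
$$
Now apply $\mathcal D=-\I\boldsymbol\alpha\cdot\nabla$ and use the product rule. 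The term in which the derivative falls on $\Gamma$ is $|\Sph^{n-1}|\,(\mathcal D\Gamma)(x)\,\mathcal{G}\,\psi(x/|x|^2)$, which vanishes on $\R^n\setminus\{0\}$ by \eqref{greenequation}. In the term in which the derivative falls on $\psi(x/|x|^2)$ the chain rule produces the Jacobian factor $|x|^{-2}\big(\delta_{ij}-2x_ix_j|x|^{-2}\big)$; combining this with the identity
$$
\sum_{j}\Big(\delta_{ij}-2\frac{x_ix_j}{|x|^2}\Big)\,\alpha_j\,(\boldsymbol\alpha\cdot x) = -(\boldsymbol\alpha\cdot x)\,\alpha_i \,,
$$
which is a one-line consequence of \eqref{anticommutation} (and of $(\boldsymbol\alpha\cdot x)^2=|x|^2 I_N$), and with $\alpha_i\mathcal{G}=-\mathcal{G}\alpha_i$, one reassembles $\sum_i\alpha_i(\partial_i\psi)(x/|x|^2)$ and finds that this term equals $|x|^{-2}(\mathcal D\psi)_\cK(x)$. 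Adding the two contributions gives \eqref{dkelvin}. (If $\psi$ is only a distributional solution, the same computation is carried out after pairing against test functions supported away from the origin.)

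For \eqref{quadkelvin}, by \eqref{dkelvin} and the unitarity of $\mathcal U(x)$,
$$
\langle\psi_\cK(x),(\mathcal D\psi_\cK)(x)\rangle = |x|^{-2}\langle\psi_\cK(x),(\mathcal D\psi)_\cK(x)\rangle = |x|^{-2n}\,\langle\psi,\mathcal D\psi\rangle(x/|x|^2) \,,
$$
where the factors of $\mathcal U(x)$ cancel in the inner product and $2+2(n-1)=2n$. Integrating over $\R^n$ and substituting $y=x/|x|^2$, with $dy=|x|^{-2n}\,dx$, yields \eqref{quadkelvin}. Identity \eqref{normkelvin} is the same argument with $\mathcal D$ removed: unitarity gives $|\psi_\cK(x)|=|x|^{-(n-1)}|\psi(x/|x|^2)|$, and since $(n-1)\cdot 2^\#=2n$ one gets $|\psi_\cK(x)|^{2^\#}=|x|^{-2n}|\psi(x/|x|^2)|^{2^\#}$; integrating and substituting as before gives \eqref{normkelvin}. (For \eqref{quadkelvin} one should of course also note that the integrands are integrable in the situations where the lemma is applied, e.g.\ for bounded solutions with the decay of Theorem~\ref{thm-main}.)

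The only genuine work is the algebraic identity underlying \eqref{dkelvin}; once $\mathcal U(x)$ is written through $\boldsymbol\alpha\cdot x$ and $\mathcal{G}$, everything reduces to \eqref{anticommutation} and bookkeeping, and the main point to watch is the order of the (non-commuting) matrix factors. It is also worth verifying, once and for all, that $\mathcal U(x)=\I|x|^{-1}(\boldsymbol\alpha\cdot x)\mathcal{G}$, the relation $\mathcal{G}\alpha_j=-\alpha_j\mathcal{G}$, the unitarity (indeed, Hermiticity) of $\mathcal U(x)$, and the invariance $\mathcal U(x/|x|^2)=\mathcal U(x)$ (which makes $\cdot_\cK$ an involution, useful later) all hold uniformly in $n\geq 2$ with the paper's conventions, so that no separate treatment of $n=2$ is needed.
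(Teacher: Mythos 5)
Your proof is correct, and for the main identity \eqref{dkelvin} it takes a genuinely different route from the paper. The paper differentiates \eqref{kelvin} directly, splits $\cD\psi_\cK$ into three terms $\Phi_1+\Phi_2+\Phi_3$, and verifies by an explicit computation with the $\sigma_j$ that $\Phi_1+\Phi_2=0$ while $\Phi_3=|x|^{-2}(\cD\psi)_\cK$; it does this for $n\geq 3$ and asserts that $n=2$ is analogous. You instead observe that $|x|^{-(n-1)}\,\mathcal U(x)=|\Sph^{n-1}|\,\Gamma(x)\,\mathcal G$ with $\mathcal G=\mathrm{diag}(I_{N/2},-I_{N/2})$, so that the derivative falling on the prefactor is killed outright by $\cD\Gamma=0$ on $\R^n\setminus\{0\}$ (i.e.\ by \eqref{greenequation}), and the remaining term is reassembled into $|x|^{-2}(\cD\psi)_\cK$ via the Clifford identity $\sum_j(\delta_{ij}-2x_ix_j|x|^{-2})\alpha_j(\boldsymbol\alpha\cdot x)=-(\boldsymbol\alpha\cdot x)\alpha_i$ and $\alpha_i\mathcal G=-\mathcal G\alpha_i$; I checked these identities and the identification of $\mathcal U(x)$ in both conventions ($n=2$ and $n\geq 3$), and they hold. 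What your approach buys is a uniform treatment of all $n\geq 2$, and a conceptual explanation of the cancellation that the paper obtains by brute force: the Kelvin transform is, up to the constant chirality twist $\mathcal G$, multiplication by the Green's function, so the "extra" terms vanish precisely because $\Gamma$ is a fundamental solution. The proofs of \eqref{quadkelvin} and \eqref{normkelvin} (unitarity of $\mathcal U$ plus the Jacobian $|x|^{-2n}$ of the inversion and the exponent identity $(n-1)2^\#=2n$) coincide with the paper's.
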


\begin{proof}
We prove \eqref{dkelvin} for $n\geq3$. The case $n=2$ follows along the same lines. We fix $x\in\R^n\setminus\{0\}$ and compute, using \eqref{kelvin}, for any $j=1,...,n$,
\be\label{derivative}
\begin{split}
\partial_j\psi_\cK(x)&=-\frac{(n-1)x_j}{\vert x\vert^{n+1}}\, \mathcal U(x) \psi(x/\vert x\vert^2) \\
& \qquad +\vert x\vert^{-(n-1)}\begin{pmatrix}0 & -\I\Sum^n_{k=1}\sigma_k\left(\frac{\delta_{j,k}}{\vert x\vert} - \frac{x_k x_j}{\vert x\vert^3}\right) \\ \I\Sum^n_{k=1}\sigma_k\left(\frac{\delta_{j,k}}{\vert x\vert} - \frac{x_k x_j}{\vert x\vert^3} \right) & 0 \end{pmatrix}\psi(x/\vert x\vert^2) \\
& \qquad +\vert x\vert^{-(n-1)} \,\mathcal U(x)\Sum^n_{r=1}\partial_r\psi(x/\vert x\vert^2)\left(\frac{\delta_{j,r}}{\vert x\vert^2} - \frac{2x_r x_j}{\vert x\vert^4}\right)\,, 
\end{split}
\ee
where $\delta_{a,b}$ is the Kronecker symbol. We multiply \eqref{derivative} from the left by $-\I\alpha_j$, sum over $j=1,...,n$ and denote the resulting terms by
\be\label{summingup}
\cD\psi_\cK(x)=\Phi_1(x)+\Phi_2+\Phi_3(x)\,.
\ee

The anticommutation properties \eqref{anticommutation} and the definition \eqref{sigmaanticommutation} will be repeatedly used in the computations.
\smallskip

In order to rewrite $\Phi_1$, observe that 
\begin{equation}
\label{eq:computation}
(-\I\boldsymbol{\alpha}\cdot x)\mathcal{U}(x)= \vert x\vert \begin{pmatrix} I_{N/2} & 0 \\ 0 & -I_{N/2}  \end{pmatrix},
\end{equation}
where $I_{N/2}$ is the $(N/2)$-dimensional identity matrix. Then we get
\be\label{first}
\Phi_1(x)=-(n-1)\vert x\vert^{-n} \begin{pmatrix} I_{N/2} & 0 \\ 0 & -I_{N/2}  \end{pmatrix} \psi(x/|x|^2)\,.
\ee

We now turn to $\Phi_2$. Using
$$
\Sum^n_{j,k=1}\sigma_j\sigma_k\left(\frac{\delta_{j,k}}{\vert x\vert} - \frac{x_k x_j}{\vert x\vert^3}\right)= \left( \frac{n}{\vert x\vert}-\Sum^n_{j=1} \frac{x^2_j}{\vert x\vert^3} \right) I_{N/2} =\frac{n-1}{\vert x\vert}I_{N/2}
$$
we get
\be\label{second}
\Phi_2(x)=(n-1)\vert x\vert^{-n} \begin{pmatrix} I_{N/2} & 0 \\ 0 & -I_{N/2}  \end{pmatrix} \psi(x/|x|^2)  =-\Phi_1(x)\,.
\ee

Finally,
\be\label{third}\begin{split}
\Phi_3(x)& =\vert x\vert^{-(n-1)}\Sum^n_{j=1}(-\I\alpha_j)\mathcal U(x)\Sum^n_{r=1}\partial_r\psi(x/\vert x\vert^2)\left(\frac{\delta_{j,r}}{\vert x\vert^2} - \frac{2x_r x_j}{\vert x\vert^4}\right) \\
&=\vert x\vert^{-(n+1)}\Sum^n_{j=1}(-\I\alpha_j)\mathcal U(x)\partial_j \psi(x/\vert x\vert^2)\\
&\quad -2\vert x\vert^{-(n+3)}\Sum^n_{j=1}(-\I\alpha_j)x_j\mathcal U(x)\Sum^n_{r=1}x_r\partial_r\psi(x/\vert x\vert^2) \,.
\end{split}
\ee
The anticommutation properties of the matrices $\sigma_j$ give
\be\label{commuting}
(-i\alpha_j)\mathcal U(x)=\mathcal U(x)(-i\alpha_j)+\frac{2x_j}{\vert x	\vert}\begin{pmatrix}I_{N/2} &0 \\ 0 & -I_{N/2} \end{pmatrix} \,,
\ee
and we get
\be\label{last1}\begin{split}
&\vert x\vert^{-(n+1)}\Sum^n_{j=1}(-i\alpha_j)\mathcal U(x)\partial_j \psi(x/\vert x\vert^2) \\ &=\vert x\vert^{-2}(\cD\psi)_\cK(x)+2\vert x\vert^{-(n+2)}\begin{pmatrix}I_{N/2} &0 \\ 0 & -I_{N/2} \end{pmatrix} \Sum^n_{r=1}x_r\partial_r\psi(x/\vert x\vert^2)\,.
\end{split}
\ee
Using \eqref{eq:computation} the last term in \eqref{third} becomes
\begin{align*}
& -2\vert x\vert^{-(n+3)}\Sum^n_{j=1}(-\I\alpha_j)x_j\mathcal U(x)\Sum^n_{r=1}x_r\partial_r\psi(x/\vert x\vert^2) \\
& = -2\vert x\vert^{-(n+2)} \begin{pmatrix} I_{N/2} & 0 \\ 0 & -I_{N/2}  \end{pmatrix} \Sum^n_{r=1}x_r\partial_r\psi(x/\vert x\vert^2) \,.
\end{align*}
Thus, we have shown that
$$
\Phi_3(x) = \vert x\vert^{-2}(\cD\psi)_\cK(x) \,,
$$
which proves \eqref{dkelvin}.
\smallskip

Let us now prove \eqref{quadkelvin} and \eqref{normkelvin}. Because of the unitarity of $\mathcal U$, we have
$$
|\psi_\cK(x)| = |x|^{-(n-1)} |\psi(x/|x|^2)| \,.
$$
Therefore,
\begin{equation}
\label{eq:inversionlq}
\int_{\R^n} |\psi_\cK(x)|^{2^\#}\,dx = \int_{\R^n} |\psi(x/|x|^2)|^{2n/(n-1)}  |x|^{-2n}\,dx = \int_{\R^n} |\psi(x)|^{2n/(n-1)} \,dx \,.
\end{equation}

By \eqref{dkelvin} one finds
$$
\langle\psi_\cK(x), (\mathcal D\psi_\cK)(x)\rangle = |x|^{-2} \langle\psi_\cK(x),(\mathcal D\psi)_\cK(x)\rangle = |x|^{-2n} \langle\psi(x/|x|^2),(\mathcal D\psi)(x/|x|^2)\rangle
$$
and then
$$
\int_{\R^n} \langle\psi_\cK(x), (\mathcal D\psi_\cK)(x)\rangle\,dx = \int_{\R^n} \langle\psi(x),(\mathcal D\psi)(x)\rangle\,dx \,,
$$
thus concluding the proof of the lemma.
\end{proof}

We are now in the position to give the

\begin{proof}[Proof of Theorem \ref{thm-main}]
We aim at showing that $\psi_\cK$ also satisfies an equation of type \eqref{criticaldiracgen} and then, noting that $\psi_\cK\in L^{2^\#}(\R^n,\C^N)$ by \eqref{eq:inversionlq}, we can apply the regularity result from Proposition \ref{holder}.

According to \eqref{criticaldiracgen} and Lemma \ref{inversiondirac} we have
\begin{align*}
\mathcal D\psi_\cK & = |x|^{-2}(\mathcal D\psi)_\cK = |x|^{-2} (h(x,\psi)\psi)_\cK = |x|^{-n-1} \mathcal{U}(x) h(x/|x|^2,\psi(x/|x|^2))\psi(x/|x|^2) \\
& = \tilde h(x,\psi_\cK(x))\psi_\cK(x)
\qquad\text{in}\ \R^n\setminus\{0\} \,,
\end{align*}
where
$$
\tilde h(x,z) := |x|^{-2} \mathcal{U}(x)h(x/|x|^2,|x|^{n-1}\mathcal{U}(x)^*z)\mathcal{U}(x)^*
\qquad\text{for}\ x\in\R^n\setminus\{0\} \,,\ z\in\C^N \,.
$$
Note that assumption \eqref{eq:hass} implies that
$$
\| \tilde h(x,z)\| = |x|^{-2} \| h(x/|x|^2,|x|^{n-1}\mathcal{U}^*(x)z)\| \leq |x|^{-2} C \left| |x|^{n-1}\mathcal{U}^*(x)z \right|^{2/(n-1)} = C |z|^{2/(n-1)} \,,
$$
that is, $\tilde h$ satisfies also assumption \eqref{eq:hass}.

Let us show that $\psi_\cK$ satisfies the equation not only on $\R^n\setminus\{0\}$ but on $\R^n$, that is,
\begin{equation}
\label{eq:eqpsii}
\int_{\R^n} \langle \mathcal D \phi,\psi_\cK\rangle\,dx = \int_{\R^n} \langle \phi, \tilde h(x,\psi_\cK) \psi_\cK\rangle\,dx 
\qquad\text{for all}\ \phi\in C_c^\infty(\R^n,\C^N) \,.
\end{equation}
To prove this, let $\eta\in C^\infty(\R^n,\R)$ with $\eta\equiv 0$ in a neighborhood of the origin and $\eta\equiv 1$ outside the unit ball, and set $\eta_\epsilon(x):=\eta(x/\epsilon)$. Then, since $\eta_\epsilon\phi\in C_c^\infty(\R^n\setminus\{0\},\C^N)$,
$$
\int_{\R^n} \langle \mathcal D (\eta_\epsilon\phi),\psi_\cK\rangle\,dx = \int_{\R^n} \eta_\epsilon \langle \phi, \tilde h(x,\psi_\cK) \psi_\cK\rangle\,dx
$$
Since $\langle \phi, \tilde h(x,\psi_\cK) \psi_\cK\rangle\in L^1(\R^n)$ and $|\eta_\epsilon|\leq \|\eta\|_\infty$ with $\eta_\epsilon\to 1$ pointwise almost everywhere, dominated convergence implies that the right side converges to the right side of \eqref{eq:eqpsii}. On the left side we use $\mathcal D (\eta_\epsilon\phi)= \eta_\epsilon\mathcal D\phi - \I(\boldsymbol{\alpha}\cdot\nabla\eta_\epsilon)\phi$. The contribution coming from first term here converges to the left side of \eqref{eq:eqpsii} by a similar argument as before. Finally,
$$
\left| \int_{\R^n} \langle(\boldsymbol{\alpha}\cdot\nabla\eta_\epsilon)\phi,\psi_\cK\rangle\,dx \right| \leq \|\psi_\cK\|_{2^\#} \|\phi\|_\infty \|\nabla\eta_\epsilon\|_\infty |\{ |x|<\epsilon\}|^\frac{n+1}{2n} \lesssim \epsilon^\frac{n-1}{2}
$$
which tends to zero as $\epsilon\to 0$. This proves \eqref{eq:eqpsii}.

We now conclude from Proposition \ref{holder} that $\psi_\cK\in C^{0,\alpha}(\R^n)$ for any $\alpha<1$. In particular,
$$
\left| \psi_\cK(y) - \Psi \right| \leq C_\alpha |y|^\alpha
\qquad\text{for all}\ |y|\leq 1
$$
with $\Psi=\psi_\cK(0)$. Since $\mathcal U(x/|x|^2)^*=\mathcal U(x)$, this is equivalent to the bound in the theorem.
\end{proof}


\section{Higher regularity}\label{sec:reghigher}

In this section we assume that $H$ is a function defined on $\{z\in\C^N:\ |z|=1\}$ taking values in the $N\times N$-matrices. We assume that $H$ is H\"older continuous of order $\min\{2/(n-1),1\}$. Moreover, we assume that $H$ satisfies
$$
H(z) = \mathcal{U}(\omega) H(\mathcal{U}(\omega)^*z)\mathcal{U}(\omega)^*
\qquad\text{for any}\ z\in\C^N \,,\ \omega\in\R^n \ \text{with}\ |z|=|\omega|=1 \,.
$$
We set
$$
h(x,z) = h(z) = |z|^{2^\#-2} H(z/|z|) \,.
$$
Note that $h$ does not depend on $x$.

In this section we are interested in solutions of \eqref{criticaldiracgen} for $h$ of the above form and we shall show that both the regularity and the asymptotics results can be improved.

As an example of an allowed non-linearity we define for parameters $\beta_1,\beta_2\geq 0$,
\begin{equation}
\label{eq:exh}
H(z)= \begin{pmatrix}
\left(\beta_1 |z_1|^2 + 2\beta_2 |z_2|^2\right)^{2^\#-2} & 0 \\
0 & \left(\beta_1 |z_2|^2 + 2\beta_2 |z_1|^2\right)^{2^\#-2}
\end{pmatrix},
\ \ z=(z_1,z_2)\in\C^{N/2}\times\C^{N/2}.
\end{equation}
Indeed, the H\"older continuity follows from the elementary inequality $|(|\zeta|^2 + m^2)^{\alpha/2} -(|\zeta'|^2+m^2)^{\alpha/2}|\leq |\zeta-\zeta'|^\alpha$ and the symmetry condition follows by a simple computation. For $n=2$ this leads exactly to the non-linearity appearing in \eqref{diracsystem}. For general $n\geq 2$ and $\beta_1=2\beta_2=1$ we obtain the model equation \eqref{criticaldirac}.

\begin{proposition}\label{holderhigher}
Let $h$ be as above and let $\psi\in L^{2^\#}(\R^n,\C^N)$ be a weak solution of \eqref{criticaldiracgen}. Then
$$
\psi\in C^{1,\alpha}(\R^n,\C^N)
\qquad\text{for any}\ \alpha<\min\{2/(n-1),1\} \,.
$$
\end{proposition}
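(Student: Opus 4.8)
The plan is to bootstrap regularity starting from what we already know. By Proposition~\ref{holder} we have $\psi\in C^{0,\alpha}(\R^n,\C^N)$ for every $\alpha<1$, and by Proposition~\ref{bounded} we have $\psi\in L^\infty$. The key point is that with $h$ of the special form $h(z)=|z|^{2^\#-2}H(z/|z|)$, the map $z\mapsto h(z)z$ is H\"older continuous of order $\min\{2/(n-1),1\}$ as a function of $z\in\C^N$: this follows from the assumed H\"older continuity of $H$ together with the elementary fact that $z\mapsto |z|^{2^\#-2}z=|z|^{2/(n-1)}z$ is itself $C^{0,\min\{2/(n-1),1\}}$ near any point (and globally, using the bound $|\psi|\leq\|\psi\|_\infty$). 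Composing with $\psi\in C^{0,\alpha}$, we get $h(\cdot,\psi(\cdot))\psi(\cdot)\in C^{0,\beta}(\R^n,\C^N)$ for $\beta=\alpha\min\{2/(n-1),1\}$, and, since $\psi$ decays, this function also lies in $L^p$ for all $p\in[2n/(n+1),\infty]$.

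Next I would invoke the representation $\psi=\Gamma*(h(\cdot,\psi)\psi)$ from Lemma~\ref{integraleq} and use mapping properties of the convolution with $\Gamma$. Since $\Gamma(x)=\frac{\I}{|\Sph^{n-1}|}\boldsymbol{\alpha}\cdot x/|x|^n$ is a kernel homogeneous of degree $-(n-1)$ (a component of the Riesz potential $I_1$ up to Riesz transforms), convolution with $\Gamma$ maps $C^{0,\beta}_c\to C^{1,\beta}$ for $\beta\in(0,1)$; more precisely, $\nabla(\Gamma*f)$ is given by (a Riesz-transform combination of) the Riesz potential $I_0$-type singular integral of $f$, which is bounded on $C^{0,\beta}$ by classical Schauder-type estimates. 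To handle the fact that $\psi$ is defined on all of $\R^n$ (not compactly supported), I would localize: fix a large ball, split $h(\cdot,\psi)\psi=f_1+f_2$ with $f_1$ compactly supported and equal to $h(\cdot,\psi)\psi$ near the ball and $f_2$ supported away from it; then $\Gamma*f_2$ is smooth near the ball (the kernel is smooth off the diagonal and $f_2\in L^p$), while $\Gamma*f_1$ gains one derivative with a H\"older modulus $\beta$. This yields $\psi\in C^{1,\beta}_{\loc}$, hence $\psi\in C^{1,\beta}(\R^n)$ since the estimates are uniform in the center of the ball by translation invariance and the global bounds on $\psi$.

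Finally, one iterates once more if $\beta<\alpha$ is the only obstruction: now $\psi\in C^{1,\beta}\subset C^{0,\alpha'}$ for every $\alpha'<1$, and we may repeat, but the gain is capped because $z\mapsto h(z)z$ is only $C^{0,\min\{2/(n-1),1\}}$. So $h(\cdot,\psi)\psi\in C^{0,\gamma}$ with $\gamma$ any number less than $\min\{2/(n-1),1\}$ (taking $\alpha$ close to $1$), and then $\psi=\Gamma*(h(\cdot,\psi)\psi)\in C^{1,\gamma}$ for every $\gamma<\min\{2/(n-1),1\}$, which is the claim. The main obstacle is the Schauder estimate for $\Gamma*$ in the non-compact setting: one must be careful that the constant in $\|\Gamma*f\|_{C^{1,\beta}(B)}\lesssim \|f\|_{C^{0,\beta}(B')}+\|f\|_{L^p}$ does not depend on the location of $B$, which is where the decay estimate $|\psi(x)|\le C(1+|x|)^{-(n-1)}$ from Theorem~\ref{thm-main} and the resulting integrability \eqref{integrability} are used to control the far-field contribution $\Gamma*f_2$ uniformly.
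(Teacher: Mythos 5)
Your proposal is correct and follows essentially the same route as the paper: establish global H\"older continuity of order $\min\{2/(n-1),1\}$ for the nonlinearity (the paper does this for $z\mapsto h(z)$ via the elementary bound $|z/|z|-z'/|z'||\leq |z-z'|/\sqrt{|z||z'|}$, then treats $h(\psi)\psi$ as a product), conclude $h(\psi)\psi\in C^{0,\beta}$ for all $\beta<\min\{2/(n-1),1\}$, and gain one derivative through $\psi=\Gamma*(h(\psi)\psi)$ by mapping properties of Riesz potentials. Your localization/Schauder discussion fills in a step the paper leaves as ``standard,'' and your final iteration is unnecessary since taking $\alpha$ close to $1$ in the first pass already yields the full range of exponents.
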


\begin{proof}
We first claim that, if $n\geq 3$,
$$
\|h(z)-h(z')\|\lesssim |z-z'|^{2/(n-1)}
\qquad\text{for all}\ z,z'\in\C^N
$$
and, if $n=2$,
$$
\|h(z)-h(z)\|\lesssim (|z|+|z'|) |z-z'|
\qquad\text{for all}\ z,z'\in\C^2
$$
Indeed, let $n\geq 3$ and assume without loss of generality that $|z'|\leq |z|$. Then, since $2^*-2=2/(n-1)\leq 1$,
\begin{align*}
\|h(z)-h(z')\| & \leq ||z|^{2^*-2}-|z'|^{2^*-2}| \|H(z/|z|)\| + |z'|^{2^*-2}\|H(z/|z|)-H(z'/|z'|)\| \\
& \leq \|H\|_{C^{1,2/(n-1)}} \left( \left||z|^{2^*-2}-|z'|^{2^*-2}\right|  + |z'|^{2^*-2} \left| z/|z| - z'/|z'| \right|^{2/(n-1)} \right)\\
& \leq \|H\|_{C^{1,2/(n-1)}} \left( \left||z|-|z'|\right|^{2^*-2}  + |z'|^{2^*-2} \left(|z-z'|/\sqrt{|z||z'|} \right)^{2^*-2} \right) \\
& \leq 2 \|H\|_{C^{1,2/(n-1)}} |z-z'|^{2^*-2} \,.
\end{align*}
The argument for $n=2$ is similar and is omitted.

We recall from Proposition \ref{holder} that $\psi\in C^{0,\alpha}$ for any $\alpha<1$. This, together with the above bounds on $h$ implies that $h(\psi)\psi \in C^{0,\alpha}$ for all $\alpha<1$ if $n=2$ and for all $\alpha<2/(n-1)$ if $n\geq 3$. Therefore, by mapping properties of Riesz potentials, $\psi=\Gamma*(h(\psi)\psi)\in C^{1,\alpha}$ for all $\alpha<1$ if $n=2$ and for all $\alpha<2/(n-1)$ if $n\geq 3$, as claimed.
\end{proof}

We obtain the following consequence about asymptotics.

\begin{corollary}\label{asymphigher}
Under the assumption of Proposition \ref{holderhigher} there are $\Psi,\Phi_1,\ldots,\Phi_n\in\C^N$ such that for any $\alpha<\min\{2/(n-1),1\}$ there is a $C_\alpha<\infty$ such that
$$
\left| \psi(x) - |x|^{-n+1}\mathcal{U}(x)\Psi + |x|^{-n} \sum_{j=1}^n \frac{x_j}{|x|} \mathcal{U}(x)\Phi_j \right| \leq C_\alpha |x|^{-n-\alpha}
\qquad\text{for all}\ |x|\geq 1 \,.
$$
\end{corollary}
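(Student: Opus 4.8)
The plan is to combine the $C^{1,\alpha}$ regularity from Proposition~\ref{holderhigher} with the Kelvin transform, exactly as in the proof of Theorem~\ref{thm-main}. Recall that the Kelvin transform $\psi_\cK(x)=|x|^{-(n-1)}\mathcal{U}(x)\psi(x/|x|^2)$ solves the equation $\mathcal D\psi_\cK=\tilde h(x,\psi_\cK)\psi_\cK$ on $\R^n\setminus\{0\}$, where $\tilde h(x,z)=|x|^{-2}\mathcal{U}(x)h(|x|^{n-1}\mathcal{U}(x)^*z)\mathcal{U}(x)^*$. The crucial point is that the symmetry hypothesis $H(z)=\mathcal{U}(\omega)H(\mathcal{U}(\omega)^*z)\mathcal{U}(\omega)^*$ for $|z|=|\omega|=1$, together with the homogeneity of $h$, makes $\tilde h$ \emph{independent of $x$}: writing $z=|z|\,\zeta$ with $|\zeta|=1$ and $\omega=x/|x|$, one computes
$$
\tilde h(x,z)=|x|^{-2}|x|^{(n-1)(2^\#-2)}|z|^{2^\#-2}\mathcal{U}(\omega)H(\mathcal{U}(\omega)^*\zeta)\mathcal{U}(\omega)^*=|z|^{2^\#-2}H(\zeta)=h(z),
$$
using $(n-1)(2^\#-2)=2$. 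Hence $\psi_\cK$ solves the \emph{same} equation $\mathcal D\psi_\cK=h(\psi_\cK)\psi_\cK$ on $\R^n\setminus\{0\}$, and the same cutoff argument as in the proof of Theorem~\ref{thm-main} (using $\psi_\cK\in L^{2^\#}$) shows it solves it weakly on all of $\R^n$. Therefore Proposition~\ref{holderhigher} applies to $\psi_\cK$ and gives $\psi_\cK\in C^{1,\alpha}(\R^n,\C^N)$ for every $\alpha<\min\{2/(n-1),1\}$.

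The Taylor expansion of $\psi_\cK$ at the origin then reads
$$
\left|\psi_\cK(y)-\Psi-\sum_{j=1}^n y_j\Phi_j\right|\leq C_\alpha|y|^{1+\alpha}\qquad\text{for }|y|\leq 1,
$$
with $\Psi=\psi_\cK(0)\in\C^N$ and $\Phi_j=\partial_j\psi_\cK(0)\in\C^N$. Now I would substitute $y=x/|x|^2$, multiply through by $|x|^{-(n-1)}\mathcal{U}(x)$, and use the inversion identity $\mathcal{U}(x/|x|^2)^*=\mathcal{U}(x)$ (so that $\psi(x)=|x|^{-(n-1)}\mathcal{U}(x)\psi_\cK(x/|x|^2)$). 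This converts the bound into
$$
\left|\psi(x)-|x|^{-n+1}\mathcal{U}(x)\Psi-|x|^{-n-1}\sum_{j=1}^n x_j\,\mathcal{U}(x)\Phi_j\right|\leq C_\alpha|x|^{-(n-1)}|x/|x|^2|^{1+\alpha}=C_\alpha|x|^{-n-\alpha}
$$
for $|x|\geq 1$. Rewriting $|x|^{-n-1}x_j=|x|^{-n}(x_j/|x|)$ and absorbing the sign into the definition of $\Phi_j$ gives precisely the statement of the corollary.

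The only genuinely nontrivial step is verifying that $\tilde h$ is $x$-independent, i.e.\ that the symmetry and homogeneity assumptions on $H$ conspire with the conformal weights to eliminate the $x$-dependence; everything else is a direct transcription of the Kelvin-transform argument already carried out for Theorem~\ref{thm-main} combined with a first-order Taylor expansion. One small bookkeeping point worth checking is that the weak formulation on $\R^n$ (as opposed to $\R^n\setminus\{0\}$) genuinely transfers: this is the same cutoff-and-dominated-convergence argument as in the proof of Theorem~\ref{thm-main}, using $\psi_\cK\in L^{2^\#}$ and the estimate $\|\nabla\eta_\epsilon\|_\infty|\{|x|<\epsilon\}|^{(n+1)/(2n)}\lesssim\epsilon^{(n-1)/2}\to 0$, and requires no new ideas. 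Finally, note that $n\geq 3$ forces $\alpha<2/(n-1)\leq 1$, while $n=2$ gives $\alpha<1$; in both cases $\psi_\cK\in C^{1,\alpha}$ is exactly what Proposition~\ref{holderhigher} provides, so no extra regularity is needed.
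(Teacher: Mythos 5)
Your argument is correct and coincides with the paper's own proof: both apply the Kelvin transform, use the symmetry and homogeneity of $H$ together with $(n-1)(2^\#-2)=2$ to show $\tilde h(x,z)=h(z)$, invoke Proposition \ref{holderhigher} for $\psi_\cK$, and translate the first-order Taylor expansion at the origin back via $\mathcal U(x/|x|^2)^*=\mathcal U(x)$. The extra detail you supply on transferring the weak formulation across the origin is the same cutoff argument already carried out in the proof of Theorem \ref{thm-main}.
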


\begin{proof}
We argue as in the proof of Theorem \ref{thm-main} by applying the regularity result to the Kelvin transform $\psi_\cK$. Indeed, Proposition \ref{holderhigher} implies that
$$
\left| \psi_\cK(y) - \Psi - \sum_{j=1}^n \Phi_j y_j \right|\leq C_\alpha |y|^{1+\alpha}
\qquad\text{for all}\ |y|\leq 1
$$
with $\Psi=\psi_\cK(0)$ and $\Phi_j = \partial_j\psi_\cK(0)$, which is equivalent to the bound in the corollary.

Therefore we only need to prove that the function $\tilde h(x,z)$ appearing in the proof of Theorem \ref{thm-main} is of the form allowed in Proposition \ref{holderhigher}. Using the assumptions on $h$, we have
$$
\tilde h(x,z) = |x|^{-2} \mathcal{U}(x) h(|x|^{n-1}\mathcal{U}(x)^* z) \mathcal{U}(x)^* = |z|^{2^\#-2} \mathcal{U}(x) H(\mathcal{U}(x)^* z/|z|) \mathcal{U}(x)^* = h(z) \,,
$$
so, in fact, the non-linearity is invariant under the Kelvin transform.
\end{proof}

\begin{proposition}\label{smooth}
Assume that $n=2$ and that $H$ is given by \eqref{eq:exh}. Let $\psi\in L^4(\R^2,\C^2)$ be a weak solution of \eqref{criticaldiracgen}. Then
$$
\psi\in C^\infty(\R^2,\C^2) \,.
$$
\end{proposition}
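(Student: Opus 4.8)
The plan is to run a bootstrap argument that upgrades the regularity of $\psi$ one derivative at a time, using the integral equation $\psi=\Gamma*(h(\psi)\psi)$ from Lemma \ref{integraleq}, the smoothing properties of the Riesz-type kernel $\Gamma$, and the fact that for $n=2$ the nonlinearity $H$ in \eqref{eq:exh} is a smooth (indeed polynomial-type) function of $z$ away from the set where one of the two ``building blocks'' $\beta_1|z_1|^2+2\beta_2|z_2|^2$ or $\beta_1|z_2|^2+2\beta_2|z_1|^2$ vanishes. More precisely, for $n=2$ we have $2^\#-2=1$, so $h(z)\psi$ has entries that are cubic polynomials in $\psi_1,\psi_2,\overline{\psi_1},\overline{\psi_2}$ — a genuinely smooth (real-analytic) function of $\psi$ with no singularity at all. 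This is the key simplification that makes the $n=2$ case so much better than the general $n$ case of Corollary \ref{asymphigher}.

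First I would record the base case: by Proposition \ref{holderhigher} (or already Proposition \ref{holder}), $\psi\in C^{1,\alpha}(\R^2,\C^2)$ for all $\alpha<1$. Then I would set up the induction: suppose $\psi\in C^{k,\alpha}(\R^2,\C^2)$ for all $\alpha<1$ for some $k\geq 1$. Since the map $z\mapsto h(z)z$ is a polynomial in $(z,\bar z)$, the composition $x\mapsto h(\psi(x))\psi(x)$ lies in $C^{k,\alpha}_\loc$ as well; and because $\psi\in L^4\cap L^\infty$ (boundedness from Proposition \ref{bounded}) with all its derivatives up to order $k$ bounded, one gets appropriate global integrability and decay of $h(\psi)\psi$ together with its derivatives — in particular $h(\psi)\psi\in C^{k,\alpha}(\R^2,\C^2)$ and its derivatives up to order $k$ lie in suitable $L^p$ spaces. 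Convolution with $\Gamma$, whose kernel has the same smoothing order as the Riesz potential $I_1$, then gains one derivative: $\psi=\Gamma*(h(\psi)\psi)\in C^{k+1,\alpha}(\R^2,\C^2)$ for all $\alpha<1$. Iterating over all $k$ gives $\psi\in C^\infty(\R^2,\C^2)$.

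The main technical point — and the one I expect to require the most care — is the justification that differentiating the convolution $\Gamma*(h(\psi)\psi)$ really does gain a full derivative at each step, uniformly on $\R^2$; this is the statement that if $f\in C^{k,\alpha}(\R^2)$ with $f$ and its derivatives up to order $k$ in $L^{p_0}\cap L^{p_1}$ for an appropriate pair $p_0<2<p_1$ (so that $\Gamma$ can be split into an $L^{p_0'}$ piece near the diagonal and an $L^{p_1'}$ piece at infinity, matching the weak-$L^{2}$ kernel $\Gamma$ on $\R^2$), then $\Gamma*f\in C^{k+1,\alpha}$. This is exactly the ``standard mapping properties of Riesz potentials'' invoked in Propositions \ref{holder} and \ref{holderhigher}, now applied to derivatives of $f$ rather than $f$ itself, and the verification that the derivatives of $h(\psi)\psi$ stay in the right Lebesgue spaces follows from $\psi\in L^4\cap L^\infty$, $\nabla\psi\in L^\infty$, and the product/chain rule applied to the polynomial $z\mapsto h(z)z$. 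Once that mapping statement is in hand the induction is automatic, so I would state it as the crux and carry the bootstrap through without belaboring the routine estimates.
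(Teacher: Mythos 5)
Your proposal is correct and follows essentially the same route as the paper: start from $\psi\in C^{1,\alpha}$ via Proposition \ref{holderhigher}, observe that for $n=2$ the nonlinearity $h(\psi)\psi$ is a (cubic) polynomial in $\psi,\overline\psi$ and hence as smooth as $\psi$, gain one derivative by convolving with $\Gamma$, and iterate. (Minor slip: for $n=2$ one has $2^\#-2=2$, not $1$, but this does not affect the argument since the relevant point is only that the nonlinearity is polynomial.)
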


\begin{proof}
We know from Proposition \ref{holderhigher} that $\psi\in C^{1,\alpha}(\R^2,\C^2)$ for any $\alpha<1$. Since $h$ is smooth, we infer that also $h(\psi)\psi\in C^{1,\alpha}(\R^2,\C^2)$ for any $\alpha<1$ and therefore, by mapping properties of Riesz potentials, $\psi=\Gamma*(h(\psi)\psi)\in C^{2,\alpha}$ for all $\alpha<1$. Iterating, we obtain the proposition.
\end{proof}

Arguing in the same way as in the proof of Corollary \ref{asymphigher} we find that $\psi$ has a complete asymptotic expansion at infinity.

\begin{corollary}\label{asympcomplete}
Under the assumption of Proposition \ref{smooth} there are $\zeta_\alpha\in\C^2$, $\alpha\in\N_0^2$, such that for any $M\in\N$ there is a $C_M<\infty$ such that
$$
\left| \psi(x) - |x|^{-n+1} \mathcal{U}(x) \sum_{|\alpha|_1\leq M} \frac{x^\alpha}{|x|^{|\alpha|_1}} \zeta_\alpha \right| \leq C_M |x|^{-n-M}
\qquad\text{for all}\ |x|\geq 1 \,,
$$
where $x^\alpha=x_1^{\alpha_1}x_2^{\alpha_2}$ and $|\alpha|_1 =|\alpha_1|+|\alpha_2|$.
\end{corollary}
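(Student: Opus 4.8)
The plan is to mimic the proof of Corollary \ref{asymphigher}, applying Proposition \ref{smooth} to the Kelvin transform $\psi_\cK$ rather than to $\psi$ itself. First I would recall from the proof of Corollary \ref{asymphigher} that, since $H$ satisfies the stated symmetry condition, the transformed nonlinearity $\tilde h(x,z)$ coincides with $h(z)$, so that $\psi_\cK\in L^4(\R^2,\C^2)$ is again a weak solution of \eqref{criticaldiracgen} with the \emph{same} nonlinearity given by \eqref{eq:exh}. Proposition \ref{smooth} then yields $\psi_\cK\in C^\infty(\R^2,\C^2)$.

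Next I would Taylor-expand $\psi_\cK$ at the origin. By smoothness, for every $M\in\N$ there is $C_M<\infty$ with
$$
\left| \psi_\cK(y) - \sum_{|\alpha|_1\leq M} \frac{\partial^\alpha\psi_\cK(0)}{\alpha!}\, y^\alpha \right| \leq C_M |y|^{M+1}
\qquad\text{for all}\ |y|\leq 1 \,.
$$
Setting $\zeta_\alpha := \partial^\alpha\psi_\cK(0)/\alpha!$ and substituting $y=x/|x|^2$, so that $|y|=1/|x|$ and $y^\alpha = x^\alpha/|x|^{2|\alpha|_1}$, multiplying the above inequality by $|x|^{-(n-1)}\mathcal{U}(x)$ (a unitary matrix, hence norm-preserving) and using $\mathcal{U}(x/|x|^2)^* = \mathcal{U}(x)$ together with the definition \eqref{kelvin} of the Kelvin transform gives
$$
\left| \psi(x) - |x|^{-n+1}\mathcal{U}(x) \sum_{|\alpha|_1\leq M} \frac{x^\alpha}{|x|^{|\alpha|_1}} \zeta_\alpha \right| \leq C_M |x|^{-(n-1)} |x|^{-(M+1)} = C_M |x|^{-n-M}
$$
for all $|x|\geq 1$, since $|x|^{-(n-1)}|x|^{-2|\alpha|_1}|x|^{|\alpha|_1} = |x|^{-(n-1)-|\alpha|_1}$ and the remainder term scales as claimed. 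This is exactly the asserted bound, recalling $n=2$ here.

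The only point requiring any care — and hence the main (though minor) obstacle — is bookkeeping the powers of $|x|$ in the change of variables $y=x/|x|^2$, making sure the factor $|x|^{-2|\alpha|_1}$ coming from $y^\alpha$ combines correctly with the normalization $x^\alpha/|x|^{|\alpha|_1}$ appearing in the statement, and that the error term $|y|^{M+1}$ produces precisely $|x|^{-n-M}$ after multiplication by $|x|^{-(n-1)}$. Everything else is a direct transcription of the Kelvin-transform argument already used twice above, so no genuinely new idea is needed.
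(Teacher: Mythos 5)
Your argument is correct and is exactly the paper's intended proof: the paper itself only says ``arguing as in the proof of Corollary \ref{asymphigher}'', and you have carried that out, using the Kelvin-invariance of the nonlinearity, Proposition \ref{smooth} for $\psi_\cK$, and a Taylor expansion at the origin. One caveat on the bookkeeping you rightly singled out: what you actually derive is $\bigl|\psi(x)-\mathcal U(x)\sum_{|\alpha|_1\le M}|x|^{-n+1-|\alpha|_1}\,\frac{x^\alpha}{|x|^{|\alpha|_1}}\,\zeta_\alpha\bigr|\le C_M|x|^{-n-M}$, i.e.\ each term carries an extra factor $|x|^{-|\alpha|_1}$ (equivalently $y^\alpha=x^\alpha/|x|^{2|\alpha|_1}$), which does not literally match the printed statement with the uniform prefactor $|x|^{-n+1}$; your version is the consistent one (compare the first-order term $|x|^{-n}x_j/|x|$ in Corollary \ref{asymphigher}), so the discrepancy is a typo in the corollary rather than a gap in your proof, but you should not claim the two expressions are ``exactly'' the same.
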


\begin{remark}
Under the assumptions of Proposition \ref{holderhigher}, but assuming in addition that $H$ is $C^\infty$, one has $\psi\in C^\infty(\R^n\setminus\{\psi=0\})$ for any $n\geq 2$. This follow by the same argument as in Proposition \ref{smooth} because $h(\psi)\psi$ has the same H\"older regularity as $\psi$ in compact subsets of $\{\psi\neq 0\}$. Translating this statement into asymptotics, we see that, if $\Psi\neq 0$ in Corollary \ref{asymphigher}, then $\psi$ has a complete asymptotic expansion at infinity similarly as in Corollary \ref{asympcomplete}.
\end{remark}

In connection with the previous remark we mention the deep results of \cite{bar} that the set $\{\psi=0\}$ has Hausdorff dimension at most $n-2$.


\section{Classification of `radial' solutions}
\label{sec-optimizers}

Our goal in this section is to prove Theorem \ref{classification} which classifies all solutions of \eqref{criticaldirac} of the form \eqref{soler}. We emphasize that once one has passed to the radial formulation \eqref{eq:radialsystem} the restriction that $n$ is an integer can be dropped. Our proof works for general real $n>1$.

\begin{proof}[Proof of Theorem \ref{classification}]
We pass to logarithmic variables and write
$$
u(r) = r^{-(n-1)/2} f(\ln r) \,,
\qquad
v(r) = r^{-(n-1)/2} g(\ln r)
$$
for functions $f,g$ defined on $\R$. The equations \eqref{eq:radialsystem} become
$$
f' + \frac{n-1}{2} f = g (f^2 + g^2)^{1/(n-1)} \,,
\qquad
g' - \frac{n-1}{2} g = - f (f^2+g^2)^{1/(n-1)}
$$
and the boundary conditions become
$$
\lim_{t\to-\infty} f(t) = \lim_{t\to-\infty} g(t) = 0 \,.
$$
We emphasize that the equations are now autonomous. Moreover, one easily checks that
$$
\mathcal E = -fg + \frac{1}{n} (f^2+g^2)^{n/(n-1)}
$$
is a constant. Since it tends to zero at $-\infty$, we conclude that
$$
fg = \frac{1}{n} (f^2+g^2)^{n/(n-1)}
\qquad\text{on}\ \R \,.
$$
We abbreviate $\rho = f^2+g^2$. Squaring the previous identity gives
$$
f^2 (\rho -f^2) = \frac{1}{n^2} \rho^{2n/(n-1)} \,.
$$
Solving for $f^2$ we obtain
$$
f^2 = \frac{1}{2} \left( \rho \pm \sqrt{\rho^2- \frac{4}{n^2}\rho^{2n/(n-1)}}\right),
\qquad
g^2 = \frac{1}{2} \left( \rho \mp \sqrt{\rho^2- \frac{4}{n^2}\rho^{2n/(n-1)}} \right).
$$
Note that this also implies that $\rho^2 \geq (4/n^2)\rho^{2n/(n-1)}$ on $\R$. The signs in the formulas for $f^2$ and $g^2$ are correlated. The signs may change but they may do so only at points where $\rho^2=(4/n^2)\rho^{2n/(n-1)}$.

Our next goal is to derive a differential equation for $\rho$. Using the differential equations for $f$ and $g$ we obtain
$$
(f^2+g^2)' = - (n-1)(f^2-g^2) \,,
$$
and inserting the above formulas for $f$ and $g$ we obtain
$$
\rho' = \mp (n-1)\sqrt{\rho^2- \frac{4}{n^2}\rho^{2n/(n-1)}} \,.
$$
This equation can be solved explicitly. On an interval where $\rho^2>(4/n^2)\rho^{2n/(n-1)}$ and where $f^2$ is given by the above formula with the $+$ sign, we find
$$
\rho(t) = \left( \frac{n}{2}\right)^{n-1} \cosh^{-n+1} (t-t_0)
$$
for some $t_0\in\R$; see Remark \ref{odesol} below for details. Choosing a maximal interval with these properties we deduce that this interval has necessarily the form $(t_0,\infty)$. Analogously one sees that any maximal interval where $\rho^2>(4/n^2)\rho^{n/(n-1)}$ and where $f^2$ is given by the above formula with the $-$ sign is of the form $(-\infty,t_1)$ for some $t_1\in\R$ and
$$
\rho(t) = \left( \frac{n}{2}\right)^{n-1} \cosh^{-n+1} (t-t_1)
$$
on this interval. We conclude that $t_0=t_1$ and therefore 
$$
\rho(t) = \left( \frac{n}{2}\right)^{n-1} \cosh^{-n+1} (t-t_0)
\qquad\text{for all}\ t\in\R \,,
$$
unless one has $\rho^2= (4/n^2)\rho^{2n/(n-1)}$ on all of $\R$, which means that either $\rho\equiv 0$ (and therefore $f\equiv g\equiv 0$) or $\rho = ((n-1)/2)^{n-1}$. In the latter case, from the formulas for $f^2$ and $g^2$ we see that $f^2=g^2=(1/2)((n-1)/2)^{n-1}$, but these functions do not satisfy the boundary conditions at $-\infty$, so this case is excluded. (Note that this corresponds to the singular solution in Remark \ref{singsol}.)

We return to the non-trivial case. Inserting the formula for $\rho$ into the above formulas for $f^2$ and $g^2$ we deduce that
$$
f^2 = \frac12 \left( \frac{n}{2}\right)^{n-1} e^{-(t-t_0)} \cosh^{-n} (t-t_0) \,,
\qquad
g^2 = \frac12 \left( \frac{n}{2}\right)^{n-1} e^{t-t_0} \cosh^{-n} (t-t_0) \,.
$$
(In fact, one checks these computations separately on the intervals $(-\infty,t_0)$ and $(t_0,\infty)$, where one knows the signs in the formulas for $f^2$ and $g^2$. The change in sign at $t_0$ is compensated by the fact that the expression
$$
\sqrt{\rho^2-(4/n^2)\rho^{2n/(n-1)}}=(n/2)^{n-1} \cosh^{-n}(t-t_0) |\sinh(t-t_0)|
$$
involves the absolute value of the $\sinh$.)

The formula $fg=(1/n)\rho^{n/(n-1)}$ together with the fact that $\rho$ never vanishes implies that $f$ and $g$ are either both positive or both negative. Thus, for some $\sigma\in\{+1,-1\}$,
$$
f=\sigma \sqrt{\frac12 \left( \frac{n}{2}\right)^{n-1}} e^{(t-t_0)/2} \cosh^{-n/2} (t-t_0) \,,
\quad
g = \sigma \sqrt{\frac12 \left( \frac{n}{2}\right)^{n-1}} e^{-(t-t_0)/2} \cosh^{-n/2} (t-t_0) \,.
$$
Changing back to the variable $r$ we have obtained the claimed formulas with $\lambda = e^{t_0}$.
\end{proof}

\begin{remark}\label{odesol}
In the proof above we solved the equation $\rho'=\mp(n-1)\sqrt{\rho^2-(4/n^2)\rho^{2n/(n-1)}}$. This can be done as follows, treating for instance the case with the $-$ minus. Let $\sigma(a)=\sqrt{1-(4/n^2)a^{2/(n-1)}}$ for $0< a<(2/n)^{-n+1}$. Then the equation reads $\rho'=-(n-1)\rho \sigma(\rho)$. We compute
$$
\frac{d\sigma(a)}{da} = - \frac1{n-1} (1-(4/n^2)a^{2/(n-1)})^{-1/2} (4/n^2) a^{2/(n-1)-1}= - \frac{1}{n-1} \frac{1-\sigma(a)^2}{a\sigma(a)} \,.
$$
Thus, using the equation for $\rho$,
$$
(\sigma(\rho))' = \frac{d\sigma}{da}|_{a=\rho} \rho' = 1-\sigma(\rho)^2 \,.
$$
Since $(1-b^2)^{-1}$ has anti-derivative $\text{artanh}\, b$, we obtain that for some $t_0$,
$$
t-t_0 = \mathrm{artanh}\, \sigma(\rho(t)) \,.
$$
That is,
$$
\tanh(t-t_0) = \sqrt{1-(4/n^2)\rho(t)^{2/(n-1)}} \,,
$$
which gives the claimed formula for $\rho(t)$.
\end{remark}


\section{Excited states in 2D}\label{sec:faster}

In this subsection we consider equation \eqref{criticaldirac} with $n=2$, which is the same as \eqref{diracsystem} with $\beta_{1}=1$ and $\beta_{2}=1/2$. We make the ansatz \eqref{ansatzexcitedstates} and arrive at the equations
\begin{equation}
\label{eq:excitedsystemexpl}
\begin{cases}
u' + \frac{S+1}{r} u & = v(u^2+v^2) \,, \\
v' - \frac{S}{r} v & = -u (u^2+v^2) \,,
\end{cases}
\end{equation}
which, of course, need to be supplemented with boundary conditions. We will prove the following classification result analogous to Theorem \ref{classification}.

\begin{thm}\label{fasterexplicit}
Let $S\in\Z$ and let $u,v$ be real functions on $(0,\infty)$ satisfying \eqref{eq:excitedsystemexpl} as well as
\be\label{eq:boundaryconditions}
\lim_{r\to 0} r^{1/2} u(r) = \lim_{r\to 0} r^{1/2} v(r) = 0 \,.
\ee
Then either $u=v=0$ or
$$
u(r) = \sigma \lambda^{-1/2} U(r/\lambda) \,,
\qquad
v(r) = \tau \lambda^{-1/2} V(r/\lambda)
$$
for some $\lambda>0$ and $\sigma,\tau\in\{+1,-1\}$, where
$$
U(r) = \sqrt{2|2S+1|} \frac{r^S}{r^{2S+1}+r^{-(2S+1)}} \,,
\qquad
V(r) = \sqrt{2|2S+1|} \frac{r^{-S-1}}{r^{2S+1}+r^{-(2S+1)}}
$$
and $\sigma=\tau$ if $2S+1>0$ and $\sigma=-\tau$ if $2S+1<0$.
\end{thm}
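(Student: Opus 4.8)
The strategy mirrors the proof of Theorem \ref{classification}: pass to logarithmic variables to make the system autonomous, find a conserved quantity, use it together with the boundary conditions to pin down an explicit solution of an auxiliary scalar ODE, and then reconstruct $u$ and $v$. Concretely, I would write
\begin{equation}
\label{eq:logansatz}
u(r) = r^{-1/2} f(\ln r) \,, \qquad v(r) = r^{-1/2} g(\ln r)
\end{equation}
so that, with $t = \ln r$, the system \eqref{eq:excitedsystemexpl} becomes
\begin{equation}
\label{eq:logsystem}
f' + \left(S + \tfrac12\right) f = g(f^2+g^2) \,, \qquad g' - \left(S+\tfrac12\right) g = -f(f^2+g^2) \,,
\end{equation}
which is autonomous, and the boundary condition \eqref{eq:boundaryconditions} reads $\lim_{t\to-\infty} f(t) = \lim_{t\to -\infty} g(t) = 0$. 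Setting $k = S + \tfrac12$ (so $|k| = |2S+1|/2$ and $\tau = \sgn k$), one checks that
\begin{equation}
\label{eq:conserved}
\mathcal{E} = -k\,fg + \tfrac14 (f^2+g^2)^2
\end{equation}
is constant along solutions; since it vanishes at $-\infty$ we get $k\,fg = \tfrac14(f^2+g^2)^2$ on all of $\mathbb{R}$. In particular $fg$ has the sign of $k$ everywhere (so $f,g$ never vanish unless both do), and with $\rho = f^2+g^2$ we obtain, exactly as in Section \ref{sec-optimizers}, that $f^2$ and $g^2$ are the two roots of $X(\rho - X) = \rho^4/(16k^2)$, i.e.
\begin{equation}
\label{eq:froots}
f^2 = \tfrac12\left(\rho \pm \sqrt{\rho^2 - \tfrac{\rho^4}{4k^2}}\right), \qquad g^2 = \tfrac12\left(\rho \mp \sqrt{\rho^2 - \tfrac{\rho^4}{4k^2}}\right),
\end{equation}
the signs being correlated and changeable only where $\rho^2 = \rho^4/(4k^2)$.

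\textbf{The ODE for $\rho$ and its solution.} Differentiating $\rho = f^2+g^2$ and using \eqref{eq:logsystem} gives $\rho' = -2k(f^2-g^2)$, and substituting \eqref{eq:froots} yields
\begin{equation}
\label{eq:rhoode}
\rho' = \mp 2k \sqrt{\rho^2 - \tfrac{\rho^4}{4k^2}} \,.
\end{equation}
This is solved by the substitution $\sigma(\rho) = \sqrt{1 - \rho^2/(4k^2)}$ as in Remark \ref{odesol}: one finds $(\sigma(\rho))' = 2k^2(1 - \sigma(\rho)^2)/|k| \cdot (\pm 1)\cdots$ — more precisely the same computation shows $\tanh(2k(t-t_0)) = \pm\sqrt{1 - \rho(t)^2/(4k^2)}$ on a maximal interval with fixed signs, hence
\begin{equation}
\label{eq:rhosol}
\rho(t) = 2|k|\,\cosh^{-1}\!\big(2k(t-t_0)\big)
\end{equation}
for some $t_0$. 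Matching the maximal intervals for the $+$ and $-$ branches (one of the form $(t_0,\infty)$, the other $(-\infty,t_1)$) forces $t_0 = t_1$ and \eqref{eq:rhosol} holds on all of $\mathbb{R}$ — unless $\rho^2 \equiv \rho^4/(4k^2)$ identically, i.e. $\rho \equiv 0$ (giving $f=g=0$) or $\rho \equiv 2|k| = |2S+1|$, in which case $f^2=g^2=|2S+1|/2$ is constant and violates the boundary condition at $-\infty$, so that case is excluded (this is the analogue of the singular solution, cf. Remark \ref{singsol}).

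\textbf{Reconstruction of $u,v$.} Inserting \eqref{eq:rhosol} into \eqref{eq:froots}, and using $\sqrt{\rho^2 - \rho^4/(4k^2)} = 2|k|\cosh^{-2}(2k(t-t_0))\,|\sinh(2k(t-t_0))|$ so that the absolute value absorbs the sign change at $t_0$, one gets
\begin{equation}
\label{eq:fgsq}
f^2 = |k|\, e^{-2k(t-t_0)} \cosh^{-2}\!\big(2k(t-t_0)\big), \qquad g^2 = |k|\, e^{2k(t-t_0)} \cosh^{-2}\!\big(2k(t-t_0)\big),
\end{equation}
and since $fg$ has the constant sign of $k = \sgn^{-1}\tau$, there are $\sigma,\tau'\in\{\pm1\}$ with $\sigma\tau' = \tau$ and
\begin{equation}
\label{eq:fgfinal}
f = \sigma\sqrt{|k|}\, e^{-k(t-t_0)}\cosh^{-1}\!\big(2k(t-t_0)\big), \qquad g = \tau'\sqrt{|k|}\, e^{k(t-t_0)}\cosh^{-1}\!\big(2k(t-t_0)\big).
\end{equation}
Undoing \eqref{eq:logansatz} with $\lambda = e^{t_0}$ and $2\cosh(2k\ln(r/\lambda)) = (r/\lambda)^{2k} + (r/\lambda)^{-2k} = (r/\lambda)^{2S+1} + (r/\lambda)^{-(2S+1)}$, a short computation identifies $f,g$ with the stated $U,V$ (noting $\sqrt{|k|} = \sqrt{|2S+1|/2}$ and pulling out the factor $\sqrt{2}$), and the sign relation $\sigma\tau' = \tau = \sgn(2S+1)$ becomes "$\sigma = \tau'$ if $2S+1>0$, $\sigma = -\tau'$ if $2S+1<0$."

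\textbf{Main obstacle.} The mechanical computations are routine given the template of Section \ref{sec-optimizers}; the one genuinely delicate point is the gluing argument that upgrades \eqref{eq:rhosol} from a single maximal monotonicity interval to all of $\mathbb{R}$ while correctly tracking which of the correlated signs in \eqref{eq:froots} is active on each side of $t_0$, and verifying that the resulting $f,g$ in \eqref{eq:fgfinal} actually solve \eqref{eq:logsystem} (not merely $f^2,g^2$ solving the squared relations). One must check the signs are consistent at the crossing point $t_0$ and that no spurious sign change is permitted elsewhere — exactly as the parenthetical remarks in the proof of Theorem \ref{classification} do. I expect this bookkeeping, rather than any hard analysis, to be where care is needed.
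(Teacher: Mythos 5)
Your strategy is exactly the paper's: the logarithmic substitution $u(r)=r^{-1/2}f(\ln r)$, $v(r)=r^{-1/2}g(\ln r)$, the conserved quantity $\mathcal E$, the quadratic relation expressing $f^2$ and $g^2$ in terms of $\rho=f^2+g^2$, the explicit solution $\rho(t)=2|k|\cosh^{-1}(2k(t-t_0))$ with $k=S+\tfrac12$, and the gluing of the two correlated sign branches at $t_0$. All the conceptual points are correctly identified: the exclusion of the constant solution $\rho\equiv|2S+1|$ by the boundary condition, the fact that $fg$ has constant sign so $f,g$ never vanish, and the role of $|\sinh|$ in absorbing the branch change at $t_0$.

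There is, however, a sign error in the reconstruction step, precisely at the place you flag as needing care, and carried through literally it yields the wrong conclusion. You assign $f^2=|k|\,e^{-2k(t-t_0)}\cosh^{-2}(2k(t-t_0))$ and $g^2=|k|\,e^{+2k(t-t_0)}\cosh^{-2}(2k(t-t_0))$; the correct attribution is the opposite one. Indeed, on the branch where $f^2=\tfrac12(\rho+\sqrt{\rho^2-\rho^4/(4k^2)})$ one has $\rho'=-2k\sqrt{\cdots}$, so for $k>0$ this branch lives on $(t_0,\infty)$, where
$$
f^2=|k|\cosh^{-2}(2k(t-t_0))\bigl(\cosh(2k(t-t_0))+|\sinh(2k(t-t_0))|\bigr)=|k|\,e^{+2k(t-t_0)}\cosh^{-2}(2k(t-t_0))\,,
$$
and the same closed formula persists on $(-\infty,t_0)$ where the $-$ branch is active (and likewise for $k<0$). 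With your version the final identification comes out as $u=\sigma\lambda^{-1/2}V(r/\lambda)$ and $v=\tau\lambda^{-1/2}U(r/\lambda)$, i.e.\ $U$ and $V$ interchanged relative to the theorem; this is visibly false, since e.g.\ for $S=0$ it would force $u(0)\neq 0$, making $u'+u/r$ blow up while $v(u^2+v^2)$ stays bounded. (The paper's proof of Theorem \ref{classification} contains the same interchange in its intermediate display for $f^2,g^2$ but recovers in the final formulas, so this is an easy slip; still, it must be fixed for the "short computation" at the end to actually produce the stated $U,V$.) Everything else in your argument matches the paper's proof.
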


We emphasize that as in Theorem \ref{classification} we impose rather weak boundary conditions.

In our proof we do not use the fact that $S$ is an integer. Any real number $S$ works. For $S=-1/2$ the proof shows that $u=v=0$ is the only solution satisfying the boundary conditions.

\begin{proof}
We write again
$$
u(r) = r^{-1/2} f(\ln r) \,,
\qquad
v(r) = r^{-1/2} g(\ln r)
$$
for functions $u,v$ defined on $\R$. The equations become
$$
f' + \left( S+\frac{1}{2}\right) f = g (f^2 + g^2) \,,
\qquad
g' - \left(S +\frac{1}{2}\right) g = - f (f^2+g^2)
$$
and the boundary conditions become
$$
\lim_{t\to-\infty} f(t) = \lim_{t\to-\infty} g(t) = 0 \,.
$$
One easily checks that
$$
\mathcal E = -(2S+1) fg + \frac{1}{2} (f^2+g^2)^2
$$
is a constant. Since it tends to zero at $-\infty$, we conclude that
$$
fg = \frac{1}{2(2S+1)} (f^2+g^2)^2
\qquad\text{on}\ \R \,.
$$
We abbreviate $\rho = f^2+g^2$. Squaring the previous identity gives
$$
f^2 (\rho -f^2) = \frac{1}{4(2S+1)^2} \rho^4 \,.
$$
Solving for $f^2$ we obtain
$$
f^2 = \frac{1}{2} \left( \rho \pm \sqrt{\rho^2- \frac{1}{(2S+1)^2}\rho^4}\right),
\qquad
g^2 = \frac{1}{2} \left( \rho \mp \sqrt{\rho^2- \frac{1}{(2S+1)^2}\rho^4} \right).
$$
Note that this also implies that $\rho^2 \geq (1/(2S+1)^2)\rho^4$ on $\R$. The signs in the formulas for $f^2$ and $g^2$ are correlated. The signs may change but they may do so only at points where $\rho^2=(1/(2S+1)^2)\rho^4$.

Our next goal is to derive a differential equation for $\rho$. Using the differential equations for $f$ and $g$ we obtain
$$
(f^2+g^2)' = - (2S+1)(f^2-g^2) \,,
$$
and inserting the above formulas for $f$ and $g$ we obtain
$$
\rho' = \mp (2S+1) \sqrt{\rho^2- \frac{1}{(2S+1)^2}\rho^4} \,.
$$
This equation can be solved similarly as in Remark \ref{odesol} and we obtain, unless $\rho\equiv 0$,
$$
\rho(t) = |2S+1| \cosh^{-1}((2S+1)(t-t_0)) \,.
$$

Inserting this into the above formulas for $f^2$ and $g^2$ we deduce that
$$
f^2 = \frac{|2S+1|}2 e^{(2S+1)(t-t_0)} \cosh^{-2}((2S+1)(t-t_0))
$$
and
$$
g^2 = \frac{|2S+1|}2 e^{-(2S+1)(t-t_0)} \cosh^{-2}((2S+1)(t-t_0)) \,.
$$
(Here one has to distinguish according to whether $2S+1$ is positive or negative.) When $2S+1>0$ the formula $fg=(1/2(2S+1))\rho^2$ together with the fact that $\rho$ never vanishes implies that $f$ and $g$ are either both positive or both negative. Thus, for some $\sigma\in\{+1,-1\}$,
$$
f=\sigma \sqrt{\frac{|2S+1|}2} e^{(2S+1)(t-t_0)/2} \cosh^{-1} ((2S+1)(t-t_0))
$$
and
$$
g = \sigma \sqrt{\frac{|2S+1|}2} e^{-(2S+1)(t-t_0)/2} \cosh^{-1} ((2S+1)(t-t_0)) \,.
$$
Changing back to the variable $r$ we have obtained the claimed formulas with $\lambda = e^{t_0}$. In case $2S+1<0$ is similar, but $f$ and $g$ have opposite signs.
\end{proof}


\section{Faster decay for excited states}\label{sec:2Dgen}

The aim of this section is to prove Theorem \ref{2Dcase} concerning \eqref{diracsystem} with parameters $\beta_1,\beta_2>0$. The case $\beta_1=2\beta_2=1$ was treated in the previous section and the case $\beta_1=2\beta_2$ can be reduced to the former by scaling.

When $\beta_1\neq 2\beta_2$ we cannot provide explicit solutions, but we can still prove existence and uniqueness (up to symmetries) of a solution and can study its asymptotic behavior rather precisely.

\begin{proof}[Proof of Theorem \ref{2Dcase}]
\emph{Step 1. Introducing logarithmic variables.}
We set again
\be\label{uf}
u(r) = r^{-1/2} f(\ln r) \,,
\qquad
v(r) = r^{-1/2} g(\ln r)
\ee
for functions $f,g$ defined on $\R$, so that system \eqref{system} becomes
\begin{equation}\label{hs}
\begin{cases}
f' + \left( S+\frac{1}{2}\right) f & = g (2\beta_{2}f^2 + \beta_{1}g^2) \,, \\
g' - \left(S +\frac{1}{2}\right) g & = - f (\beta_{1}f^2+2\beta_{2}g^2)
\end{cases}
\end{equation}
and the boundary conditions in the theorem read
\be\label{-infty}
\lim_{t\to-\infty} f(t) = \lim_{t\to-\infty} g(t) = 0 \,.
\ee
One easily checks that
\be\label{hamiltonian}
\mathcal{E} = \frac{\beta_{1}}{4}(f^{4}+g^{4})+\beta_{2}f^{2}g^{2}-\left(S+\frac{1}{2}\right)fg
\ee
is constant. The boundary conditions \eqref{-infty} imply that $\mathcal E=0$, that is,
\be\label{identity}
\frac{\beta_{1}}{4}(f^{4}+g^{4})+\beta_{2}f^{2}g^{2}=\left(S+\frac{1}{2}\right)fg.
\ee
This implies, in particular, that $f(t)\neq 0$ for all $t\in\R$ and $g(t)\neq 0$ for all $t\in\R$, unless $f\equiv g\equiv 0$. (Indeed, if $f(t_0)=0$, then \eqref{identity} implies $g(t_0)=0$ and then \eqref{hs} implies $f\equiv g\equiv 0$. The argument for $g$ is similar.) Moreover, it implies that
$$
\tau\, f(t)g(t) > 0
\qquad\text{for all}\ t\in\R \,.
$$

\emph{Step 2. Monotonicity of the angle.} We shall show that, if $(f,g)\neq(0,0)$ is a solution of \eqref{hs} with $\mathcal E=0$, then $(f,g)$ is global and the limits
$$
\theta_\pm := \lim_{t\to\pm\infty} \arctan\frac{g(t)}{f(t)}
$$
exist and satisfy $\theta_+<\theta_-$.

Indeed, the fact that $\mathcal E=0$ on the maximal interval of existence easily implies that the solution is global. Moreover, as remarked in the previous step, $\mathcal E=0$ implies that $f$ and $g$ never vanish and therefore we can introduce
$$
\theta(t) = \arctan\frac{g(t)}{f(t)} \,.
$$
Using \eqref{hs} and \eqref{identity} we compute
\begin{align*}
\theta'= \frac{g'f-f'g}{f^2+g^2} = \frac{(2S+1)gf - \beta_1 (f^4+g^4) - 4\beta_2  f^2 g^2}{f^2+g^2}= \frac{- (\beta_1/2) (f^4+g^4) - 2\beta_2  f^2 g^2}{f^2+g^2} <0 \,.
\end{align*}
This proves the claim.

\emph{Step 3. Asymptotics of solutions.} We shall show that any solution $(f,g)$ of \eqref{hs} with $\mathcal E=0$ is global and satisfies \eqref{-infty} and
\be\label{+infty}
\lim_{t\to\infty} f(t) = \lim_{t\to\infty} g(t) = 0 \,.
\ee

The global existence was already shown in the previous step. We shall deduce the asymptotic behavior from the Poincar\'e--Bendixson theorem in the form given, for instance, in \cite[Theorem 7.16]{Teschl}. Let
$$
\omega_\pm = \{ (x,y)\in\R^2:\ \text{for some}\ t_n\to\pm\infty \,, (f(t_n),g(t_n))\to (x,y)\} \,.
$$
Since the set $\{ (x,y):\ (\beta_1/4)(x^4+y^4) +\beta_2 x^2y^2=(S+1/2)xy\}$ is compact, it is easy to see that the sets $\omega_\pm$ are non-empty, compact and connected \cite[Lemma 6.6]{Teschl}. According to Poincar\'e--Bendixson, for each one of the signs $\pm$, one of the following three alternatives holds: (a) $\omega_\pm$ is a fixed point, (b) $\omega_\pm$ is a regular periodic orbit, (c) $\omega_\pm$ consists of fixed points and non-closed orbits connecting these fixed points.

A simple computation shows that the only constant solution of \eqref{hs} with $\mathcal E=0$ is $(f,g)\equiv(0,0)$. Thus, if alternative (a) holds for both signs $\pm$, then we are done. Let us rule out (b) and (c). Note that in both cases (b) and (c), the limiting periodic orbit and the limiting homoclinic orbits, if they would exist, would have $\mathcal E=0$.

According to Step 2, there are no non-trivial periodic solutions of \eqref{hs} with $\mathcal E=0$ (because for a non-trivial periodic solution $(\tilde f,\tilde g)$, $\arctan (\tilde g/\tilde f)$ does not have a limit). This rules out (b).

According to Step 2, there are $\theta_\pm$ such that
$$
\omega_\pm \subset\{ (r\cos\theta_\pm,r\sin\theta_\pm):\ r\geq 0\} \,.
$$
Thus, in order to rule out (c), it suffices to rule out the existence of a non-trivial solution $(\tilde f,\tilde g)$ of \eqref{hs} with $(\tilde f(t),\tilde g(t))\to (0,0)$ for $|t|\to\infty$ and such that $\arctan (\tilde g(t)/\tilde f(t))=\theta_\pm$ for all $t$. But this is again ruled out by Step 2. This completes the proof of the assertion.

\emph{Step 4. Existence of a homoclinic orbit.} Let $a$ and $\tau$ be as in the theorem and consider the solution $(f,g)=(q,p)$ of \eqref{hs} with initial values
$$
q(0)=\tau\, p(0) = a \,.
$$
We shall show that this solution is global and satisfies the asymptotic conditions \eqref{-infty} and \eqref{+infty}.

Indeed, by definition of $a$, identity \eqref{identity} is satisfied at $t=0$. Therefore, the solution has $\mathcal E=0$. The rest now follows from Step 3.

\emph{Step 5. Exponential decay.} We shall show that for any solution $(f,g)$ of \eqref{hs} satisfying \eqref{-infty} there is a constant $C$ such that
$$
(f^2 + g^2)^{1/2} \leq C e^{-|S+1/2| |t|}
\qquad\text{for all}\ t\in\R \,.
$$

Indeed, we compute, using \eqref{hs},
$$
(f^2+g^2)' = 2(ff'+gg') = (-(2S+1)+(2\beta_2-\beta_1)fg)(f^2-g^2) \,,
$$
$$
(f^2-g^2)' = 2(ff'-gg') = (-(2S+1)+2(2\beta_2+\beta_1)fg)(f^2+g^2)
$$
and
$$
(fg)' = f'g+fg'= -\beta_1(f^4-g^4) \,.
$$
This implies that
\begin{align*}
(f^2+g^2)'' & = (-(2S+1)+(2\beta_2-\beta_1)fg)(-(2S+1)+2(2\beta_2+\beta_1)fg)(f^2+g^2) \\
& \quad - (2\beta_2-\beta_1)\beta_1(f^4-g^4)(f^2-g^2)\,.
\end{align*}
We set $\psi=f^2+g^2$ and write the previous equation as
$$
-\psi'' + V\psi = - (2S+1)^2\psi
$$
with
$$
V = -(2S+1)( 6\beta_2+\beta_1)fg + (2\beta_2-\beta_1)2(2\beta_2+\beta_1)f^2 g^2 - (2\beta_2-\beta_1)\beta_1 (f^2-g^2)^2 \,.
$$
By \eqref{-infty} we have $\mathcal E=0$ and therefore, by Step 3, $V(t)\to 0$ as $|t|\to\infty$. By a standard comparison argument this implies that for any $0<\epsilon\leq (2S+1)^2$ there is a $C_\epsilon$ such that 
\begin{equation}
\label{eq:decayepsilon}
\psi(t) \leq C_\epsilon e^{-\sqrt{(2S+1)^2-\epsilon} |t|}
\qquad
\text{for all}\ t\in\R \,.
\end{equation}
For the sake of completeness we provide the details of this argument. Given $0<\epsilon\leq (2S+1)^2$ we choose $T_\epsilon<\infty$ such that $V(t)\geq -\epsilon$ for $t\geq T_\epsilon$. The function
$$
\phi(t) =\psi(t) - \psi(T_\epsilon) e^{-\sqrt{(2S+1)^2-\epsilon}(t-T_\epsilon)}
$$
satisfies $\phi(T_\epsilon)=0$, $\lim_{t\to\infty} \phi(t)=0$ and
$$
\phi''\geq ((2S+1)^2-\epsilon)\phi
\qquad\text{in}\ (T_\epsilon,\infty) \,.
$$
By the maximum principle, this implies that $\phi\leq 0$ in $[T_\epsilon,\infty)$. Similarly, one proves a bound near $-\infty$ and the remaining bound is obtained by continuity. This proves \eqref{eq:decayepsilon}.

Because of the decay \eqref{eq:decayepsilon} we can apply the Green's function to the equation for $\psi$ and obtain
$$
\psi(t) = - \frac{1}{2|2S+1|} \int_\R e^{-|2S+1| |t-t'|} V(t')\psi(t')\,dt' \,.
$$
Using this equation and the apriori bound \eqref{eq:decayepsilon} it is easy to obtain the claimed bound for $\psi$.

\emph{Step 6. Asymptotic behavior of $f$ and $g$.} Again, we let $(f,g)\not\equiv (0,0)$ be a solution of \eqref{hs} satisfying \eqref{-infty}. We shall show that
$$
\ell := \lim_{t\to\tau\infty} e^{(S+1/2) t} f(t)
\ \text{and}\
\ell' := \lim_{t\to -\tau\infty} e^{-(S+1/2) t} g(t)
\quad\text{exist and are non-zero and finite}
$$
and that
$$
\lim_{t\to\tau\infty} e^{3(S+1/2)t} g(t) = \frac{\beta_1 \ell^3}{4(S+1/2)} 
\quad\text{and}\quad
\lim_{t\to-\tau\infty} e^{-3(S+1/2)t} f(t) = \frac{\beta_1 \ell'^3}{4(S+1/2)}
\,.
$$

Let us prove this in case $S+1/2>0$ (so $\tau=+1$), the case $S+1/2<0$ being similar. The function $F(t)= e^{(S+1/2)t} f(t)$ satisfies
$$
F'(t) = e^{(S+1/2)t} ( f' + (S+1/2)f) = e^{(S+1/2)t} g(2\beta_2 f^2+\beta_1 g^2) \,.
$$
As shown in Step 1, either $f$ and $g$ are both positive or both negative. Thus, either $F$ is positive and increasing or it is negative and decreasing. Since it is bounded by Step 5, it tends in any case to a finite, non-zero limit $\ell$. This proves the first assertion.

The function $G(t)=e^{-(S+1/2)t}g(t)$ satisfies
\begin{equation}
\label{eq:Geq}
G'(t) = e^{-(S+1/2)t}(g'-(S+1/2)g)= -e^{-(S+1/2)t} f (\beta_1 f^2+2\beta_2 g^2) \,.
\end{equation}
For the sake of simplicity we now assume that $f$ and $g$ are both positive. The case where both are negative is treated similarly. Given $0<\epsilon\leq\ell$ there is a $t_\epsilon<\infty$ such that $f(t)\geq (\ell-\epsilon) e^{-(S+1/2)t}$ for $t\geq t_\epsilon$. We bound the right side of \eqref{eq:Geq} and get
$$
G'(t) \leq -\beta_1 (\ell-\epsilon)^3 e^{-4(S+1/2)t}
\qquad\text{for all}\ t\geq t_\epsilon \,,
$$
and, since $G(t)\to 0$ as $t\to\infty$ by Step 5,
$$
G(t) = - \int_t^\infty G'(s)\,ds \geq \beta_1 (\ell-\epsilon)^3 \int_t^\infty e^{-4(S+1/2)s}\,ds = \frac{\beta_1(\ell-\epsilon)^3}{4(S+1/2)} e^{-4(S+1/2)t}
\ \ \text{for all}\ t\geq t_\epsilon
$$
and
$$
g(t) \geq \frac{\beta_1(\ell-\epsilon)^3}{4(S+1/2)} e^{-3(S+1/2)t}
\qquad\text{for all}\ t\geq t_\epsilon \,.
$$
This is the desired asymptotic lower bound on $g(t)$. The proof of the upper bound is similar, but slightly more complicated. Using the bounds from Step 5 in \eqref{eq:Geq}, we get
$$
G'(t) \geq - \const e^{-4(S+1/2)t}
\qquad\text{for all}\ t \in\R
$$
and therefore, by a similar argument as before,
$$
g(t) \leq \const e^{-3(S+1/2)t}
\qquad\text{for all}\ t\in\R \,.
$$
Now again given $\epsilon>0$ there is a $t_\epsilon'<\infty$ such that $f(t)\leq (\ell+\epsilon)e^{-(S+1/2)t}$ for $t\geq t_\epsilon'$. Inserting this and the previous bound on $g$ in the equation for $G'$ we obtain
$$
G'(t) \geq -\beta_1(\ell+\epsilon)^3 e^{-4(S+1/2)t} - \const e^{-8(S+1/2)t}
\qquad\text{for all}\ t\geq t_\epsilon'
$$
and therefore by integration similarly as before
$$
g(t) \leq \frac{\beta_1(\ell+\epsilon)^3}{4(S+1/2)} e^{-3(S+1/2)t} + \const e^{-7(S+1/2)t}
\qquad\text{for all}\ t\geq t_\epsilon' \,.
$$
This proves the claimed asymptotics for $g$ as $t\to\infty$.

In order to obtain the asymptotics of $f$ and $g$ for $t\to-\infty$, we note that the pair $(g(-t),f(-t))$ solves \eqref{hs}. (Note that we have reversed the roles of $f$ and $g$.) Therefore, applying the previous statement to this solution we obtain the claimed asymptotics for $t\to-\infty$.

\emph{Step 7. Uniqueness.} We show that the non-trivial solution of \eqref{hs} satisfying \eqref{-infty} is unique, up to translation and a sign change.

We give the argument only for $S+1/2>0$, the case $S+1/2<0$ being similar. We know from Step 6 that
$$
\lim_{t\to\infty} \frac{g(t)}{f(t)} = 0 = \lim_{t\to-\infty} \frac{f(t)}{g(t)} \,.
$$
Thus, by continuity there is a $t_0\in\R$ such that $b:=f(t_0)=g(t_0)$. Assumption \eqref{-infty} implies $\mathcal E=0$ and therefore
$$
\frac{\beta_1}{4} 2b^4 + \beta_2 b^4 - (S+1/2) b^2 = 0 \,.
$$
Thus, $b\in\{0,-a,+a\}$ with $a$ defined in the theorem. Since the solution is non-trivial, we have $b\neq 0$. If $(q,p)$ denotes the solution from Step 4, then by uniqueness of the solution of an initial value problem we have $(f(t),g(t))=(q(t-t_0),p(t-t_0))$ if $b=a$ and $(f(t),g(t))=(-q(t-t_0),-p(t-t_0))$ if $b=-a$. This proves the above uniqueness claim.

\emph{Step 8. Conclusion of the proof.} We now prove all the statements of Theorem \ref{2Dcase} translated to logarithmic variables.

Let $(p,q)$ be the solution from Step 4 which we already know is global and satisfies \eqref{-infty}. Therefore Step 6 describes the asymptotic behavior of this solution. The fact that $\tau q(t)p(t)>0$ for all $t\in\R$ was already noted in Step 1 and the fact that $p(t)=\tau\,q(-t)$ for all $t\in\R$ follows from the fact that $(\tau\,q(-t),\tau\, p(-t))$ is a solution of \eqref{hs} with the same values at $t=0$ as $(p(t),q(t))$. This concludes the proof of part (1) of the theorem. Part (2) follows immediately from Step 7.
\end{proof}



\bibliographystyle{siam}
\bibliography{DecayDirac}

\end{document}